\documentclass[12pt]{amsart}

\usepackage{amssymb,latexsym}

\usepackage{enumerate}

\makeatletter

\@namedef{subjclassname@2010}{

  \textup{2010} Mathematics Subject Classification}

\makeatother
\newtheorem{thm}{Theorem}[section]

\newtheorem{con}{Conjecture}[section]

\newtheorem{lem}[thm]{Lemma}
\newtheorem{pro}[thm]{Proposition}
\theoremstyle{definition}

\numberwithin{equation}{section}

\newcommand{\vx}{{\bf x}}
\newcommand{\vt}{{\bf t}}

\newcommand{\va}{{\bf a}}
\newcommand{\pr}{\textup{P}}
\newcommand{\ex}{\mathbb{E}}
\newcommand{\Va}{\textup{Var}(q)}
\newcommand{\re}{\textup{Re}}
\newcommand{\Cov}{\textup{Cov}_{q;a_1,\dots,a_r}}

\frenchspacing

\textwidth=15.5cm

\textheight=23cm

\parindent=16pt

\oddsidemargin=0cm

\evensidemargin=0cm

\topmargin=-0.5cm

\begin{document}

\baselineskip=17pt

\title{The Shanks–-R\'enyi prime number race with many contestants}

\author[Youness Lamzouri]{Youness Lamzouri}

\address{Department of Mathematics, University of Illinois at Urbana-Champaign,
1409 W. Green Street,
Urbana, IL, 61801
USA}

\email{lamzouri@math.uiuc.edu}

\date{}

\begin{abstract} Under certain plausible assumptions, M. Rubinstein and P. Sarnak solved the Shanks–-R\'enyi race problem, by showing that the set of real numbers $x\geq 2$ such that $\pi(x;q,a_1)>\pi(x;q,a_2)>\dots>\pi(x;q,a_r)$  has a positive logarithmic density $\delta_{q;a_1,\dots,a_r}$. Furthermore, they established that if $r$ is fixed, $\delta_{q;a_1,\dots,a_r}\to 1/r!$ as $q\to \infty$. In this paper, we investigate the size of these densities when the number of contestants $r$ tends to infinity with $q$. In particular, we deduce a strong form of a recent conjecture of  A. Feuerverger and G. Martin which states that $\delta_{q;a_1,\dots,a_r}=o(1)$ in this case.
Among our results, we prove that $\delta_{q;a_1,\dots,a_r}\sim 1/r!$  in the region $r=o(\sqrt{\log q})$ as $q\to\infty$. We also bound the order of magnitude of these densities beyond this range of $r$. For example, we show that when $\log q\leq r\leq \phi(q)$, $\delta_{q;a_1,\dots,a_r}\ll_{\epsilon} q^{-1+\epsilon}$.

\end{abstract}

\subjclass[2010]{Primary 11N13; Secondary 11N69, 11M26}

\keywords{The Shanks–-R\'enyi race problem, primes in arithmetic progressions, zeros of Dirichlet $L$-functions.}

\thanks{The author is supported by a postdoctoral fellowship from the Natural Sciences and Engineering Research Council of Canada.}

\maketitle

\section{Introduction}
A classical problem in analytic number theory is the so-called ``Shanks--R\'enyi prime number race'' which concerns the distribution of prime numbers in arithmetic progressions. As colorfully described by Knapowski and Tur\'an in \cite{KT}, let $q\geq 3$ and $2\leq r\leq \phi(q)$ be positive integers, and denote by $\mathcal{A}_r(q)$ the set of ordered $r$-tuples of distinct residue classes $(a_1,a_2,\dots, a_r)$ modulo $q$ which are coprime to $q$. For $(a_1,a_2,\dots, a_r)\in \mathcal{A}_r(q)$, consider a game with $r$ players called ``$1$'' through ``$r$'', where at time $x$, the player ``$j$'' has a score of $\pi(x;q,a_j)$ (where $\pi(x;q,a)$ denotes the number of primes $p\leq x$ with $p\equiv a\bmod q$).
As $x\to\infty$, will all $r!$ orderings of the players occur for infinitely many integers $x$?

It is generally believed that the answer to this question is yes for all $q$ and all $(a_1,a_2,\dots, a_r)\in \mathcal{A}_r(q)$. An old result of Littlewood \cite{Li1} shows that this is indeed true in the special cases $(q,a_1,a_2)=(4,1,3)$ and $(q,a_1,a_2)=(3,1,2)$. Since then, this problem has been extensively studied by many authors, including  Knapowski and Tur\'an \cite{KT}, Bays and  Hudson \cite{BH1} and \cite{BH2},  Kaczorowski \cite{Ka1}, \cite{Ka2} and \cite{Ka3}, Feuerverger and Martin \cite{FeM}, Martin \cite{Ma}, Ford and Konyagin \cite{FK1} and \cite{FK2}, Fiorilli and Martin \cite{FiM}, and the author \cite{La1} and \cite{La2}.

A major breakthrough was made in 1994 by Rubinstein and Sarnak who completely solved this problem in \cite{RS}, conditionally on the two following assumptions:
\begin{itemize}
\item The Generalized Riemann Hypothesis (GRH): all nontrivial zeros of Dirichlet $L$-functions have real part equal $1/2$.
 \item The Linear Independence Hypothesis (LI) (also known as the Grand Simplicity Hypothesis): the nonnegative imaginary parts of the nontrivial zeros of Dirichlet $L$-functions attached to primitive characters are linearly independent over $\mathbb{Q}$.
\end{itemize}
Rubinstein and Sarnak proved, under these two hypotheses, the stronger result that for any $(a_1,\dots, a_r)\in \mathcal{A}_r(q)$,  the set of real numbers $x\geq 2$ such that
$$ \pi(x;q,a_1)>\pi(x;q,a_2)>\dots>\pi(x;q,a_r),$$ has a positive logarithmic density, which shall be denoted throughout this paper by $\delta_{q;a_1,\dots,a_r}.$ (Recall that the logarithmic density of a subset $S$ of $\mathbb{R}$ is defined as
$$ \delta_{S}:= \lim_{x\to \infty}\frac{1}{\log x}\int_{t\in S\cap [2,x]}\frac{dt}{t},$$
provided that this limit exists). To establish this result, they constructed an absolutely continuous measure $\mu_{q;a_1,\dots,a_r}$ for which
\begin{equation}
 \delta_{q;a_1,\dots,a_r} =\int_{x_1>x_2>\cdots >x_r}d\mu_{q;a_1,\dots,a_r}(x_1,\dots,x_r).
\end{equation}

Among the results they derived on these densities, Rubinstein and Sarnak showed that in an $r$-way race with $r$ fixed, all biases disappear when $q\to\infty$. More specifically they proved
\begin{equation}
\lim_{q\to\infty}\max_{(a_1,\dots,a_r)\in\mathcal{A}_r(q)}\left|r!\delta_{q;a_1,\dots,a_r}-1\right|=0.
\end{equation}
Recently, Fiorilli and Martin \cite{FiM} established an asymptotic formula for the density in a two-way race, which allows them to determine the exact rate at which $\delta_{q;a_1,a_2}$ converges to $1/2$ as $q$ grows. Shortly after, the author \cite{La1} succeeded to obtain an asymptotic formula for $\delta_{q;a_1,\dots,a_r}$ for any fixed $r\geq 3$ as $q\to\infty$, in which the rate of convergence to $1/r!$ is surprisingly different from the case $r=2$.

However, as far as the author of the present paper knows, no results have been obtained on the size of the densities $\delta_{q;a_1,\dots,a_r}$ if  $r\to\infty$ as $q\to\infty$. In \cite{FeM}, Feuerverger and Martin conjectured that in this case we should have $\delta_{q;a_1,\dots,a_r}=o(1)$. They also asked whether one can prove a uniform version of the result of Rubinstein and Sarnak (1.2), namely that this statement holds in a certain range $r\leq r_0(q)$ for some $r_0(q)\to\infty$ as $q\to\infty$.
\begin{con}[Feuerverger--Martin] We have
$$\lim_{q\to\infty}\max_{(a_1,\dots,a_r)\in\mathcal{A}_r(q)}\delta_{q;a_1,\dots,a_r}=0,$$
for any arbitrary function $r=r(q)$ tending to infinity with $q$.
\end{con}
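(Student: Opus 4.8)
\medskip

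\noindent\textbf{Proof strategy.} The plan is to pass to the Rubinstein--Sarnak limiting distribution and then analyse $\delta_{q;a_1,\dots,a_r}$ in two regimes, according to whether $r$ is small or large relative to $\sqrt{\log q}$. Recall that, under GRH and LI, Rubinstein and Sarnak's construction gives
\[
\delta_{q;a_1,\dots,a_r}=\pr\bigl(X_{q;a_1}>X_{q;a_2}>\cdots>X_{q;a_r}\bigr),
\]
where the coordinates of the random vector $\mathbf X=(X_{q;a_1},\dots,X_{q;a_r})$ are
\[
X_{q;a_j}=-c_q(a_j)+\sum_{\chi\neq\chi_0}\ \sum_{\gamma_\chi>0}\frac{2\,\re\bigl(\overline\chi(a_j)Z_{\gamma_\chi}\bigr)}{\sqrt{\tfrac14+\gamma_\chi^2}},
\]
the $Z_{\gamma_\chi}$ are independent and uniform on the unit circle, and $c_q(a)=-1+\#\{b\bmod q:\,b^2\equiv a\}$. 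The first task is to assemble the arithmetic of $\mathbf X$: the means satisfy $|\ex X_{q;a_j}|=|c_q(a_j)|\ll q^{o(1)}$; the coordinates have a common variance $V(q):=\mathrm{Var}(X_{q;a})\sim\phi(q)\log q$; and, for $a\neq b$, the covariance $\mathrm{Cov}(X_{q;a},X_{q;b})=\sum_{\chi\neq\chi_0}b_\chi\,\re\bigl(\chi(a)\overline\chi(b)\bigr)$, with $b_\chi=\sum_{\gamma_\chi>0}2/(\tfrac14+\gamma_\chi^2)$, is of much smaller order, say $\ll\phi(q)$. This last estimate is the essential number-theoretic input: the explicit formula gives $b_\chi=\log(\mathfrak q_\chi/\pi)+O(\log\log q)$, so after subtracting the mean of $b_\chi$ the sum collapses by orthogonality of characters and only a character-sum fluctuation remains, which one bounds directly. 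Consequently the correlation matrix of $\mathbf X$ is $I_r+E$ with off-diagonal entries $\ll1/\log q$.

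For $r=o(\sqrt{\log q})$ I would run a quantitative Gaussian comparison. Split each $X_{q;a_j}$ into the finite sum over zeros of height $\le T$ plus a tail that is negligible in $L^{2}$; the truncated part is a sum of many independent, uniformly bounded summands with no dominant term, so a multivariate Berry--Esseen (or moment-matching) estimate shows that $\mathbf X/\sqrt{V(q)}$ is close, for the purpose of evaluating the probability of the polyhedral cone $\{x_1>\cdots>x_r\}$, to a centred Gaussian with correlation matrix $I_r+E$. Since the normalised means are $\ll q^{o(1)}/\sqrt{\phi(q)\log q}=o(1)$ and the off-diagonal entries of $E$ are $\ll1/\log q$, expanding around the standard Gaussian --- whose cone probability is exactly $1/r!$ by symmetry --- produces an error governed by the $\binom r2$ pairwise perturbations, of total size $\ll r^2/\log q$, which is $o(1)$ precisely when $r=o(\sqrt{\log q})$. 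This gives $\delta_{q;a_1,\dots,a_r}=(1+o(1))/r!$ uniformly in the tuple, hence $\delta_{q;a_1,\dots,a_r}=o(1)$ in this range because $r\to\infty$.

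For $r$ that is not $o(\sqrt{\log q})$ one needs only an upper bound tending to $0$, so I would abandon the exact constant and use two reductions. First, \emph{monotonicity}: deleting constraints enlarges the set, so $\delta_{q;a_1,\dots,a_r}\le\delta_{q;a_{i_1},\dots,a_{i_k}}$ for any sub-tuple, which lets one pass from arbitrarily large $r$ down to $r\asymp\log q$. Second, \emph{approximate exchangeability}: the law of $\mathbf X$ is permutation-invariant only on average over $\chi$, since $\sum_{\chi\neq\chi_0}\bigl|\sum_j\xi_j\overline\chi(a_j)\bigr|^{2}$ is symmetric while the individual terms are not, the discrepancy being again controlled by the covariance estimate; hence the $r!$ cone-probabilities $\delta_{q;a_{\sigma(1)},\dots,a_{\sigma(r)}}$ are comparable up to a factor $q^{o(1)}$, and since they sum to $1$ each is $\ll q^{o(1)}/r!$. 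Combining these for $r\ge\log q$ gives $\delta_{q;a_1,\dots,a_r}\ll q^{o(1)}/(\log q)!\ll_\epsilon q^{-1+\epsilon}$, while for $\sqrt{\log q}\lesssim r\le\log q$ even the cruder bound $\delta\ll q^{o(1)}/r!$, or an exponential bound $\delta\ll 2^{-\lfloor r/2\rfloor}\exp(O(r^2/\log q))$ from a pairing argument on $X_{q;a_{2i-1}}-X_{q;a_{2i}}$, already tends to $0$. Feeding all ranges into a case distinction on the size of $r(q)$ --- bulk estimate when $r(q)=o(\sqrt{\log q})$, the remaining bounds otherwise --- yields $\delta_{q;a_1,\dots,a_r}\to0$ for every $r(q)\to\infty$, which is the conjecture.

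I expect the main obstacle to be twofold. First, making the Gaussian comparison genuinely quantitative and uniform: one must pin down the off-diagonal covariances to within a factor $o(\log q)$ --- an honest character-sum estimate sensitive to the multiplicative structure of $q$ through the conductors of the characters modulo $q$ --- and produce a multivariate CLT whose error is truly $o(1)$ for $r$ as large as $(\log q)^{1/2-\epsilon}$, which forces a delicate choice of the truncation height $T$ and careful tracking of the $r$-dependence. Second, the large-$r$ upper bound: the correlations $\ll1/\log q$ are \emph{not} small enough to survive summation over all $\binom r2$ pairs once $r\gg\sqrt{\log q}$, so the naive pairing estimate breaks down and one is forced into the approximate-exchangeability argument, where the degeneration of the covariance matrix of $\mathbf X$ as $r\to\phi(q)$ (the coordinates sum to a constant) must be handled, e.g. by projecting onto a nondegenerate coordinate subspace before comparing orderings.
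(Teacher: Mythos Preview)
Your high-level strategy --- Gaussian comparison for small $r$, monotonicity for large $r$ --- is the paper's. But you overcomplicate the large-$r$ regime. The paper observes (proof of Theorem~1.3) that $\max_{\mathcal A_r(q)}\delta_{q;a_1,\dots,a_r}\le\max_{\mathcal A_s(q)}\delta_{q;b_1,\dots,b_s}$ whenever $s<r$, simply because a chain of $r$ inequalities contains a sub-chain of $s$. Hence, once the small-$r$ estimate $\delta\sim 1/r!$ is available for $r\le\sqrt{\log q}$, taking $s(q)=\min\bigl(r(q),\lfloor(\log q)^{1/4}\rfloor\bigr)$ already gives $\max\delta\le(1+o(1))/s(q)!\to 0$ for arbitrary $r(q)\to\infty$. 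Your approximate-exchangeability claim --- that the $r!$ permuted densities agree to within a factor $q^{o(1)}$ --- is neither proved in the paper nor clearly derivable from the covariance structure alone, and your pairing bound $2^{-\lfloor r/2\rfloor}\exp(O(r^2/\log q))$ need not tend to zero when $r\asymp\log q$ (the exponent is $r(C-\tfrac12\log 2)$ with an uncontrolled constant $C$). Both issues are moot once one sees that monotonicity all the way down to the small-$r$ range suffices for the conjecture; the paper only stops at $r\asymp\log q/\log\log q$ because it is after the sharper quantitative bound $q^{-1+\epsilon}$.

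On the Gaussian comparison itself, the paper works purely on the Fourier side rather than via truncation and Berry--Esseen. The explicit formula~(3.1) expresses $\hat\mu_{q;a_1,\dots,a_r}$ as a product of Bessel functions $J_0$, and the expansion $\log J_0(z)=-z^2/4+O(z^4)$ gives directly (Proposition~3.1) that $\hat\mu_q(\vt/\sqrt{\Va})=\exp\bigl(-\tfrac12\vt^T\mathcal C\vt\bigr)\bigl(1+O(q^{-1/2+o(1)})\bigr)$ for $\|\vt\|\le\log^2 q$, with $\mathcal C$ the normalised covariance. Separate Bessel-function estimates (Proposition~3.2) show $\hat\mu_q$ decays rapidly outside this ball, so Fourier inversion with the Gaussian substituted is legitimate. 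The resulting cone integral is then sandwiched, via elementary linear algebra (Lemmas~4.1--4.2) on matrices with off-diagonal entries $\ll 1/\log q$, between $(2\pi)^{-r/2}\int_{x_1>\cdots>x_r}\exp\bigl(-\tfrac12(1\pm\kappa)\|\vx\|^2\bigr)\,d\vx$ with $\kappa\ll r/\log q$; by symmetry each of these equals $r!^{-1}(1\pm\kappa)^{-r/2}=r!^{-1}\exp(O(r^2/\log q))$. This route avoids any truncation height, any $r$-dependent CLT error, and any conductor-by-conductor covariance analysis: the ``main obstacle'' you anticipate is precisely what the Fourier method sidesteps.
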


In the present paper, we investigate the order of magnitude of $\delta_{q;a_1,\dots,a_r}$ when the number of contestants $r\to\infty$ as $q\to\infty$. In particular, answering the question of Feuerverger and Martin, we establish a uniform version of (1.2), and obtain a strong quantitative form of Conjecture 1.1.

\begin{thm} Assume GRH and LI. Let $q$ be a large positive integer. Then, for any integer $r$ such that $2\leq r\leq \sqrt{\log q}$ we have

 $$\delta_{q;a_1,\dots,a_r}= \frac{1}{r!}\left(1+O\left(\frac{r^2}{\log q}\right)\right),$$
 uniformly for all $r$-tuples $(a_1,\dots,a_r)\in\mathcal{A}_r(q)$.
\end{thm}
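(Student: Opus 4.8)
The plan is to transfer the count to a question about Gaussian vectors and then run a perturbative expansion around the independent mean‑zero Gaussian, the point being that the hypothesis $r\le\sqrt{\log q}$ is exactly what forces the quadratic and higher corrections to be negligible next to the linear one. By the theorem of Rubinstein and Sarnak \cite{RS} recalled above, under GRH and LI we have $\delta_{q;a_1,\dots,a_r}=\pr(X_1>\dots>X_r)$, where $X_j=-C(q,a_j)+\sum_{\chi\ne\chi_0}\re\!\big(\overline{\chi(a_j)}\,W_\chi\big)$, the $W_\chi$ being independent over $\chi$ and rotation invariant, and $C(q,a_j)=-1+\#\{b\bmod q:\ b^2\equiv a_j\bmod q\}$. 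I will use three facts, uniform in $q$ and in the tuple (essentially contained in \cite{La1} and the standard explicit‑formula estimates): $|C(q,a_j)|\ll_\epsilon q^{\epsilon}$; the variances $V(q):=\textup{Var}(X_j)$ are the same for every $j$ and satisfy $V(q)\asymp\phi(q)\log q$; and $|\textup{Cov}(X_i,X_j)|\ll\phi(q)$ for $i\ne j$, so the correlations $\rho_{ij}:=\textup{Cov}(X_i,X_j)/V(q)$ obey $|\rho_{ij}|\ll 1/\log q$ (with the size $\asymp 1/\log q$ attained only for pairs with $a_i\equiv-a_j$, or with $a_i/a_j\bmod q$ a small prime power). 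Since each $W_\chi$ is rotation invariant, every one‑dimensional marginal of $(X_1,\dots,X_r)$ is a fixed law translated by $-C(q,a_j)$.

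The first step is to replace $(X_1,\dots,X_r)$ by the Gaussian vector $\mathbf Z\sim N(\mathbf m,\Sigma)$ with the same mean $\mathbf m=(-C(q,a_j))_j$ and covariance $\Sigma=V(q)(I+A)$, where $A$ has zero diagonal and off‑diagonal entries $\rho_{ij}$. Writing $\mathbf X-\mathbf m$ as a sum of independent vectors, one for each pair (character, zero), whose third‑moment sum is $\ll r^{3/2}\phi(q)\log q$, a quantitative multivariate Berry--Esseen bound for convex sets (e.g. that of Bentkus), or a direct estimate of the characteristic function of $\mu_{q;a_1,\dots,a_r}$ --- which by LI is an absolutely convergent product of Bessel functions $J_0$ over the zeros --- against a smoothed indicator of $R:=\{x_1>\dots>x_r\}$, gives
\[
\big|\pr(\mathbf X\in R)-\pr(\mathbf Z\in R)\big|\ \ll\ r^{7/4}V(q)^{-1/2}\ \ll\ q^{-1/2+o(1)} .
\]
Because $r\le\sqrt{\log q}$ forces $r!=q^{o(1)}$, the right‑hand side is $\ll\frac1{r!}\cdot\frac1{\log q}$ and is absorbed by the claimed error term. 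Translating $\mathbf m$ to $0$ is equally cheap, costing $\ll\frac1{r!}\, r\max_j|C(q,a_j)|/\sqrt{V(q)}\ll\frac1{r!}q^{-1/2+o(1)}$, so it remains to evaluate $\pr(\mathbf Z\in R)$ for $\mathbf Z\sim N(0,V(q)(I+A))$.

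For this, write the Gaussian density, rescale by $\sqrt{V(q)}$, and expand $(I+A)^{-1}$ and $\det(I+A)^{-1/2}$ in powers of $A$, obtaining
\[
\pr(\mathbf Z\in R)=\big(1+O(\|A\|_F^2)\big)\,\ex\Big[\mathbf 1_R(\mathbf G)\exp\!\big(\tfrac12\mathbf G^{t}A\mathbf G-\tfrac12\mathbf G^{t}A^2\mathbf G+\cdots\big)\Big],\qquad \mathbf G\sim N(0,I_r).
\]
The leading term is $\pr(\mathbf G\in R)=1/r!$. For the linear term I would use the identity $\ex[G_{i_1}\cdots G_{i_k}\mathbf 1_R(\mathbf G)]=\tfrac1{r!}\ex[G_{(i_1)}\cdots G_{(i_k)}]$, where $G_{(1)}>\dots>G_{(r)}$ are the order statistics of $r$ i.i.d.\ standard normals, which yields $\tfrac12\ex[\mathbf G^{t}A\mathbf G\,\mathbf 1_R(\mathbf G)]=\tfrac1{2r!}\sum_{i\ne j}\rho_{ij}\,\ex[G_{(i)}G_{(j)}]$; since $|\rho_{ij}|\ll1/\log q$ and $\sum_{i\ne j}|\ex[G_{(i)}G_{(j)}]|\ll r^2$ (which follows by Cauchy--Schwarz from $\sum_i\ex[G_{(i)}^2]=r$ and $\sum_i|\ex G_{(i)}|\ll r$), this term is $O\!\big(\tfrac1{r!}\cdot\tfrac{r^2}{\log q}\big)$. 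Every term past the linear one carries an extra factor of $A$; using $\|A\|_{\mathrm{op}}\ll r/\log q$ and $\|A\|_F^2\ll r^2/(\log q)^2$, and once more reducing each $\ex[(\cdot)\,\mathbf 1_R(\mathbf G)]$ to a symmetric expectation of order statistics so that no factor $1/r!$ is lost, a careful estimate bounds the remainder of the series by $O\!\big(\tfrac1{r!}(r^2/\log q)^2\big)$, which is $O\!\big(\tfrac1{r!}\cdot\tfrac{r^2}{\log q}\big)$ precisely because $r^2\le\log q$. Assembling the three steps gives $\delta_{q;a_1,\dots,a_r}=\tfrac1{r!}\big(1+O(r^2/\log q)\big)$.

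I expect the main obstacle to lie in steps one and two: the issue is not smallness of the error but its \emph{relative} size, and a crude central limit estimate, or a Cauchy--Schwarz bound on the quadratic Gaussian term, overshoots by a power of $r!$ and destroys the multiplicative shape of the statement. The way around this is the systematic reduction of every expectation against $\mathbf 1_R(\mathbf G)$ to a symmetric expectation of Gaussian order statistics (and correspondingly careful bookkeeping of the order‑statistic moments, so as not to pick up spurious factors of $\log r$), combined with invoking $r\le\sqrt{\log q}$ only at the very end, to see that the quadratic‑in‑$A$ correction is dominated by the linear one.
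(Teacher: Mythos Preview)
Your outline is correct, but it takes a genuinely different route from the paper's argument. Both arguments reduce to estimating $\pr(\mathbf Z\in R)$ for a centred Gaussian $\mathbf Z$ with covariance $V(q)(I+A)$, where the off-diagonal entries of $A$ are $\ll 1/\log q$; the differences lie in how one reaches that Gaussian and how one handles it once there. For the first step, the paper works directly with the explicit product formula $\hat\mu_{q;a_1,\dots,a_r}=\prod J_0(\cdot)$: it shows (Proposition~3.1) that $\hat\mu$ agrees with the Gaussian characteristic function up to $O(q^{-1/2+o(1)})$ on a ball of radius $\asymp(\log q)/\sqrt{\Va}$, uses the rapid decay of $\hat\mu$ (Proposition~3.2) to localise the inverse Fourier integral to that ball, and then inverts. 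Your Berry--Esseen appeal (Bentkus for convex sets) achieves the same additive error $q^{-1/2+o(1)}$ in one stroke but is a black box; note that the sum over zeros is infinite, so a truncation is implicit. For the second step, the paper is much quicker than your order-statistic expansion: using only $|\rho_{ij}|\ll 1/\log q$ and Cauchy--Schwarz, it shows (Lemma~4.2 and inequality~(4.10)) that $\vx^T\mathcal C^{-1}\vx=(1+O(r/\log q))\|\vx\|^2$, then sandwiches $\exp(-\tfrac12\vx^T\mathcal C^{-1}\vx)$ between two \emph{symmetric} Gaussians $\exp(-\tfrac12(1\pm\kappa)\|\vx\|^2)$, for which the integral over $R$ is immediately $\frac{1}{r!}(2\pi)^{r/2}(1+\kappa)^{-r/2}=\frac{1}{r!}(1+O(r^2/\log q))$. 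Your expansion in powers of $A$, combined with the identity $\ex[\mathbf 1_R(\mathbf G)\,F(\mathbf G)]=\tfrac{1}{r!}\ex[F(G_{(1)},\dots,G_{(r)})]$ for symmetric (in law) $\mathbf G$, works too and would in principle identify the exact first-order coefficient, but it requires the moment bookkeeping you describe. One practical difference worth noting: the paper's Fourier route tracks the Gaussian-approximation error against the volume of the truncated region $\{x_1>\dots>x_r,\ |\vx|_\infty\le R\}$, which already carries a $1/r!$ factor; this is what lets the same method prove Theorem~1.2 in the wider range $r\le(1-\epsilon)\log q/\log\log q$. Your additive Berry--Esseen error $q^{-1/2+o(1)}$ is only $\ll\tfrac{1}{r!\log q}$ because $r!=q^{o(1)}$ when $r\le\sqrt{\log q}$, so the extension to Theorem~1.2 would need a different argument.
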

As a consequence, Theorem 1.1 implies that  (1.2) holds true in the range $r=o(\sqrt{\log q})$ as $q\to\infty$. Indeed in this region of $r$, all biases disappear when $q\to\infty$, namely
\begin{equation}
 \delta_{q;a_1,\dots,a_r}\sim \frac{1}{r!},
\end{equation}
uniformly for all $r$-tuples $(a_1,\dots,a_r)\in\mathcal{A}_r(q)$.
Moreover, one can also deduce that if $c_0>0$ is a suitably small constant and $r\leq c_0\sqrt{\log q}$, then uniformly for all $r$-tuples $(a_1,\dots,a_r)\in\mathcal{A}_r(q)$ we have
\begin{equation}
\delta_{q;a_1,\dots,a_r}\asymp \frac{1}{r!}.
\end{equation}

Note that  $1/r!=\exp(-r\log r+r+O(\log r))$ by Stirling's formula. Our next result shows that the densities $\delta_{q;a_1,\dots, a_r}$ have roughly the same asymptotic decay in the range  $\sqrt{\log q}\ll r\leq (1-\epsilon)\log q/\log\log q$, for any $\epsilon>0$.
\begin{thm} Assume GRH and LI. For any $\epsilon>0$, if $q$ is large and
$\sqrt{\log q}\ll r\leq (1-\epsilon)\log q/\log\log q$ is an integer, then
$$\delta_{q;a_1,\dots,a_r}=\exp\left(-r\log r+r+O\left(\log r+ \frac{r^2}{\log q}\right)\right),$$
uniformly for all $r$-tuples $(a_1,\dots,a_r)\in\mathcal{A}_r(q)$.
\end{thm}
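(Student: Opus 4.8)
The plan is to realise $\delta_{q;a_1,\dots,a_r}$ as the probability that an explicit random vector built from zeros of Dirichlet $L$-functions lands in the descending cone $\mathcal C_r=\{(x_1,\dots,x_r):x_1>x_2>\dots>x_r\}$, then to run a quantitative multivariate central limit theorem, and finally to compare the resulting Gaussian with one having scalar covariance, for which $\mathcal C_r$ has probability exactly $1/r!$. First, under GRH and LI, the Rubinstein--Sarnak measure $\mu_{q;a_1,\dots,a_r}$ in (1.1) is the law of $\mathbf X=(X_{q,a_1},\dots,X_{q,a_r})$, where
\[
X_{q,a}=-C(q,a)+\sum_{\chi\neq\chi_0}2\,\re\!\Big(\bar\chi(a)\sum_{\gamma_\chi>0}\frac{Z_{\gamma_\chi}}{\sqrt{1/4+\gamma_\chi^{2}}}\Big),
\]
the sum running over the non-principal characters $\chi\bmod q$, the $Z_{\gamma_\chi}$ independent and uniform on the unit circle, and $C(q,a)=-1+\#\{b\bmod q:b^{2}\equiv a\}$ the usual bias constant; thus $\delta_{q;a_1,\dots,a_r}=\pr(\mathbf X\in\mathcal C_r)$. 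I then record the moment data that drive everything: every coordinate has the common variance $V(q):=\textup{Var}(q)=\sum_{\chi\neq\chi_0}\sum_{\gamma_\chi>0}2/(1/4+\gamma_\chi^{2})\asymp\phi(q)\log q$; the off-diagonal covariances $\textup{Cov}(X_{q,a_i},X_{q,a_j})$, $i\neq j$, are an $O(1/\log q)$ fraction of $V(q)$ (this is the origin of the $\log q$ in the error term); the means obey $|C(q,a)|\ll 2^{\omega(q)}=q^{o(1)}$, hence are negligible against $\sqrt{V(q)}\gg q^{1/2-o(1)}$; and, decisively, since $-Z_{\gamma_\chi}$ has the same distribution as $Z_{\gamma_\chi}$, the law of $\mathbf X$ is symmetric about its mean, so \emph{all of its odd centred mixed moments vanish identically}, while its fourth moments differ from the corresponding Gaussian ones by only $O(\phi(q))$ (each character contributing $O(1)$ to the excess kurtosis).

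Next I approximate $\mathbf X$ by the Gaussian vector $\mathbf G$ with the same mean vector $\mathbf m_q$ and covariance matrix $\Sigma_q$, the target being $\pr(\mathbf X\in\mathcal C_r)=\pr(\mathbf G\in\mathcal C_r)\bigl(1+O(q^{-1+o(1)})\bigr)$. The joint characteristic function is $\widehat\mu_{q;a_1,\dots,a_r}(\xi)=\prod_{\chi,\gamma_\chi}J_0\!\bigl(2\,|\sum_j\xi_j\bar\chi(a_j)|/\sqrt{1/4+\gamma_\chi^{2}}\bigr)$ up to a unimodular factor coming from $\mathbf m_q$, and it agrees with $e^{-\frac12\xi^{T}\Sigma_q\xi}$ through second order; because the cubic term is killed by the symmetry from the first step, the leading correction is quartic, of size $\ll\phi(q)\|\xi\|^{4}$, hence of relative size $q^{-1+o(1)}$ on the range $\|\xi\|\ll r^{2}/\sqrt{V(q)}$. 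Smoothing $\mathbf 1_{\mathcal C_r}$ at scale $\eta\asymp\sqrt{V(q)}/r^{2}$ — via a Gaussian mollifier, or Beurling--Selberg majorants and minorants for each of the $r-1$ defining inequalities — then passing to the Fourier side and discarding $\|\xi\|\gg1/\eta$ using the rapid decay of $\widehat\mu_{q;a_1,\dots,a_r}$, yields the displayed estimate: the cutoff $1/\eta\asymp r^{2}/\sqrt{V(q)}$ matches exactly the range on which the quartic term is controlled, and the smoothing error is $\ll\eta r^{2}/(\sqrt{V(q)}\,r!)=o(1/r!)$. This step pins down the range of validity: for $r\le(1-\epsilon)\log q/\log\log q$ one has $1/r!=\exp(-r\log r+r+O(\log r))\ge q^{-1+\epsilon/2}$ when $q$ is large, so the absolute error $O(q^{-1+o(1)}/r!)$ is genuinely smaller than the main term — a naive Berry--Esseen bound of size $q^{-1/2}$ would be useless here, and it is the vanishing of the odd moments that supplies the needed saving.

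Finally I pass from $\mathbf G$ to the centred Gaussian $\mathbf G_0\sim N(\mathbf 0,V(q)I_r)$, for which permutation symmetry of the i.i.d.\ coordinates gives $\pr(\mathbf G_0\in\mathcal C_r)=1/r!$ exactly. Interpolating the covariance along $\Sigma_t=(1-t)V(q)I_r+t\,\Sigma_q$ and the mean along $t\,\mathbf m_q$, and differentiating in $t$, the Gaussian (Slepian/Price) identity expresses $\frac{d}{dt}\log\pr(\mathbf G_t\in\mathcal C_r)$ through the quantities $\ex[\partial_i\partial_j f]$ for a smoothed indicator $f$ of $\mathcal C_r$, which are controlled by $\sum_{i\neq j}|(\Sigma_q)_{ij}|/V(q)\ll r^{2}/\log q$ and by $\|\mathbf m_q\|^{2}/V(q)\ll r^{2}q^{-1+o(1)}$. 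Integrating over $t\in[0,1]$ gives $\pr(\mathbf G\in\mathcal C_r)=\frac1{r!}\exp\bigl(O(r^{2}/\log q)\bigr)$, and combining with the previous step yields $\delta_{q;a_1,\dots,a_r}=\frac1{r!}\exp\bigl(O(r^{2}/\log q)\bigr)$ uniformly over the $r$-tuples; Stirling's formula $1/r!=\exp(-r\log r+r+O(\log r))$ then produces the claimed estimate. (When $r\le\sqrt{\log q}$ we have $r^{2}/\log q\le1$, so $\exp(O(r^{2}/\log q))=1+O(r^{2}/\log q)$ and Theorem~1.1 is recovered; Theorem~1.2 asserts that the same relative error survives out to $r\le(1-\epsilon)\log q/\log\log q$.) The principal obstacle is the second step: carrying out the multivariate central limit theorem with an error uniform in the growing dimension $r$ and of true size $q^{-1+o(1)}$, small enough to be meaningful against the minuscule target $1/r!$.
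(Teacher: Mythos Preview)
Your overall architecture matches the paper's: compare $\widehat\mu_{q;a_1,\dots,a_r}$ to the Gaussian characteristic function, pass to the probability $\pr(\mathbf X\in\mathcal C_r)$, then compare the Gaussian with covariance $\Sigma_q$ to the scalar one and invoke Stirling. The third step is fine (your Slepian/Price interpolation is a legitimate alternative to the paper's direct estimate of $\mathcal C^{-1}$ via Lemmas~4.1--4.2), and your observation that the symmetry of $Z_{\gamma_\chi}\mapsto -Z_{\gamma_\chi}$ kills odd terms is exactly what underlies the paper's expansion $\log J_0(z)=-z^2/4+O(z^4)$ in Proposition~3.1.

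The gap is in your second step. You assert $\pr(\mathbf X\in\mathcal C_r)=\pr(\mathbf G\in\mathcal C_r)\bigl(1+O(q^{-1+o(1)})\bigr)$, but the pointwise bound $\widehat\mu_X(\xi)=\widehat\mu_G(\xi)(1+O(q^{-1+o(1)}))$ on characteristic functions does \emph{not} transfer to a multiplicative bound on integrals against a test function: $\int \widehat f_\eta\,\widehat\mu_G\,h$ with $\|h\|_\infty\le q^{-1+o(1)}$ is not controlled by $q^{-1+o(1)}\bigl|\int\widehat f_\eta\,\widehat\mu_G\bigr|$ because of possible cancellation. Worse, a smoothed indicator of the infinite cone $\mathcal C_r$ is not in $L^1(\mathbb R^r)$, so its Fourier transform is only a distribution and the Fourier-side integral you want to write down is not defined. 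The paper avoids both issues by a device you omit: it first truncates to a box $|\vx|_\infty\le R$ (with $R\asymp\sqrt{r\log r}$ after rescaling), using the tail bound of Lemma~2.3. Then the relevant region $\mathcal C_r\cap[-R,R]^r$ has \emph{finite} volume $(2R)^r/r!$, and the additive CLT error is bounded by this volume times $q^{-1/2+o(1)}\cdot(2\pi)^{-r}\int e^{-\frac12\vt^T\mathcal C\vt}d\vt$; see the estimate (4.15) for $E_2$. The crucial $1/r!$ in the error thus comes for free from the Lebesgue volume of the ordered simplex, not from any multiplicative control. Your smoothing-plus-Beurling--Selberg route could presumably be made to work, but only after inserting the same spatial truncation, at which point it collapses to the paper's argument.
\medskip
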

It would be interesting to determine the order of magnitude of the densities $\delta_{q;a_1,\dots,a_r}$ beyond the region $r\leq (1-\epsilon)\log q/\log\log q$. Unfortunately, this range seems to be the limit of what can be achieved using our method. Nevertheless, we can use Theorem 1.2 to obtain an upper bound for $\delta_{q;a_1,\dots,a_r}$ beyond this range of $r$.
\begin{thm} Assume GRH and LI. For any $\epsilon>0$, if $q$ is large and  $(1-\epsilon/2)\log q/\log\log q\leq r \leq \phi(q)$ is an integer, then
$$\max_{(a_1,\dots,a_r)\in\mathcal{A}_r(q)}\delta_{q;a_1,\dots,a_r}\ll_{\epsilon} \frac{1}{q^{1-\epsilon}}.$$
\end{thm}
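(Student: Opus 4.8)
The plan is to deduce Theorem 1.3 from Theorem 1.2 by means of the trivial observation that $\delta_{q;a_1,\dots,a_r}$ is non-increasing in $r$: adjoining one more runner to a race can only shrink the set of $x$ for which a prescribed ordering holds.

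To make this precise I would first record that, for integers $2\le r'\le r\le\phi(q)$ and any $(a_1,\dots,a_r)\in\mathcal{A}_r(q)$,
$$\delta_{q;a_1,\dots,a_r}\le\delta_{q;a_1,\dots,a_{r'}}.$$
Indeed the set $\{x\ge 2:\pi(x;q,a_1)>\cdots>\pi(x;q,a_r)\}$ is contained in $\{x\ge 2:\pi(x;q,a_1)>\cdots>\pi(x;q,a_{r'})\}$, and logarithmic density --- which exists for both sets under GRH and LI by the theorem of Rubinstein and Sarnak --- is monotone under inclusion; equivalently, $\mu_{q;a_1,\dots,a_{r'}}$ is the marginal of $\mu_{q;a_1,\dots,a_r}$ on the first $r'$ coordinates, so the inequality follows from (1.1).

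Now fix $\epsilon>0$ and an integer $r$ with $(1-\epsilon/2)\log q/\log\log q\le r\le\phi(q)$, and set $r':=\lfloor(1-\epsilon/2)\log q/\log\log q\rfloor$. Then $r'\le r$, and for $q$ large $r'$ is an integer lying in the range $\sqrt{\log q}\ll r'\le(1-\epsilon/2)\log q/\log\log q$ to which Theorem 1.2 applies (with $\epsilon/2$ in place of $\epsilon$). Combining the two facts,
$$\delta_{q;a_1,\dots,a_r}\le\delta_{q;a_1,\dots,a_{r'}}=\exp\left(-r'\log r'+r'+O\left(\log r'+\frac{(r')^2}{\log q}\right)\right)$$
for every $(a_1,\dots,a_r)\in\mathcal{A}_r(q)$, so it only remains to show the exponent on the right is $\le-(1-\epsilon)\log q$ once $q$ is large in terms of $\epsilon$. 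Writing $L=\log q$, one has $r'=(1-\epsilon/2)L/\log L+O(1)$ and $\log r'=\log L-\log\log L+O(1)$, hence
$$r'\log r'=(1-\epsilon/2)L-(1-\epsilon/2)L\,\frac{\log\log L}{\log L}+O\!\left(\frac{L}{\log L}\right),$$
while $r'=O(L/\log L)$, $\log r'=O(\log L)$ and $(r')^2/L=O(L/(\log L)^2)$. Therefore the exponent equals $-(1-\epsilon/2)L+(1-\epsilon/2)L\,\frac{\log\log L}{\log L}+O(L/\log L)$, and since $\log\log L/\log L\to0$, for $q$ large the two positive terms together are at most $(\epsilon/2)L$; thus the exponent is $\le-(1-\epsilon)L$, i.e. $\delta_{q;a_1,\dots,a_r}\le q^{-(1-\epsilon)}$, and taking the maximum over $(a_1,\dots,a_r)$ finishes the proof.

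No step here is genuinely hard; the only point requiring a little attention is the final estimate, where one should note that the largest error contribution is the secondary term $(1-\epsilon/2)L\,\log\log L/\log L$ produced by $r'\log r'$ --- which is $o(L)$ but dominates the $O(L/\log L)$ terms --- and that the factor $\epsilon/2$ built into the hypothesis $r\ge(1-\epsilon/2)\log q/\log\log q$ is precisely the slack needed to absorb this term together with the remaining $O(L/\log L)$ error into an extra $(\epsilon/2)L$.
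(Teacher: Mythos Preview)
Your proof is correct and follows essentially the same route as the paper: reduce to Theorem~1.2 via monotonicity of $\delta_{q;a_1,\dots,a_r}$ in $r$, applied at the truncation point $r'=\lfloor(1-\epsilon/2)\log q/\log\log q\rfloor$. The only cosmetic difference is that the paper derives monotonicity from the partition identity $\delta_{q;a_1,\dots,a_{r-1}}=\sum_j \delta_{q;a_1,\dots,a_{j-1},a_r,a_j,\dots,a_{r-1}}$ (using absolute continuity of $\mu_q$), whereas you use the more elementary observation that the defining sets of $x$ are nested; your explicit verification of the exponent bound is also more detailed than the paper's one-line claim.
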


The paper is organized as follows. In Section 2, following the work of Rubinstein and Sarnak, we shall construct the measure $\mu_{q;a_1,\dots,a_r}$ as a probability distribution corresponding to a certain random vector and study its covariance matrix and large deviations. In Section 3, we investigate the Fourier transform of $\mu_{q;a_1,\dots,a_r}$ and show that in a certain range $\hat{\mu}_{q;a_1,\dots,a_r}$ can be approximated by the Fourier transform of a multivariate normal distribution having the same covariance matrix. In Section 4, we study properties of multivariate normal distributions and prove Theorems 1.1, 1.2 and 1.3.

\section{The measure  $\mu_{q;a_1,\dots,a_r}$}

We begin by developing the necessary notation to construct the measure $\mu_{q;a_1,\dots,a_r}$, following the work of Rubinstein and Sarnak \cite{RS}. For $(a_1,a_2,\dots, a_r)\in \mathcal{A}_r(q)$ we introduce the vector-valued function
$$ E_{q;a_1,\dots,a_r}(x):=(E(x;q,a_1), \dots, E(x;q,a_r)),$$
where
$$ E(x;q,a):=\frac{\log x}{\sqrt{x}}\left(\phi(q)\pi(x;q,a)-\pi(x)\right).$$
The normalization is such that, if we assume GRH, $E_{q;a_1,\dots,a_r}(x)$ varies roughly boundedly as $x$ varies.
Moreover, for a nontrivial character $\chi$ modulo $q$, we denote by $\{\gamma_{\chi}\}$ the sequence of imaginary parts of the nontrivial zeros of  $L(s,\chi)$. Let $\chi_0$ denote the principal character modulo $q$ and define $S= \cup_{\chi\neq \chi_0\bmod q}\{\gamma_{\chi}\}$. Furthermore, let  $\{U(\gamma_{\chi})\}_{\gamma_{\chi}\in S}$ be a sequence of independent random variables uniformly distributed on the unit circle.

Rubinstein and Sarnak established, under GRH and LI, that the vector-valued function $E_{q;a_1,\dots,a_r}$ has a limiting distribution $\mu_{q;a_1,\dots,a_r}$, where $\mu_{q;a_1,\dots,a_r}$ is the probability measure corresponding to the random vector
\begin{equation*}
X_{q;a_1,\dots,a_r}= (X(q,a_1),\dots,X(q,a_r)),
\end{equation*}
where
$$ X(q,a)= -C_q(a)+ \sum_{\substack{\chi\neq \chi_0\\ \chi\bmod q}}\sum_{\gamma_{\chi}>0} \frac{2\text{Re}(\chi(a)U(\gamma_{\chi}))}{\sqrt{\frac14+\gamma_{\chi}^2}},$$
and $$C_q(a):=-1+ \sum_{\substack{b^2\equiv a \bmod q\\ 1\leq b\leq q}}1.$$
Note that for $(a,q)=1$ the function $C_q(a)$ takes only two values: $C_q(a)=-1$ if $a$ is a non-square modulo $q$, and $C_q(a)=C_q(1)$ if $a$ is a square modulo $q$. Furthermore, an elementary argument shows that $C_q(a)<d(q)\ll_{\epsilon}q^{\epsilon}$ for any $\epsilon>0$, where $d(q)=\sum_{m|q}1$ is the usual divisor function.

To investigate the distribution of the random vector $X_{q;a_1,\dots,a_r}$ we shall first compute its covariance matrix $\text{Cov}_{q;a_1,\dots,a_r}$ (the covariance matrix generalizes the notion of variance to multiple dimensions). Recall that the $j,k$ entry of the covariance matrix corresponds to the covariance between the $j$-th and $k$-th entry of the random vector.
\begin{lem}
The entries of  $\textup{Cov}_{q;a_1,\dots,a_r}$ are

$$\textup{Cov}_{q;a_1,\dots,a_r}(j,k)= \begin{cases}  \Va &\text{ if } j=k \\ B_q(a_j,a_k) &\text{ if } j\neq k,\end{cases}$$
where
$$
\Va:=2\sum_{\substack{\chi\neq \chi_0\\ \chi\bmod q}}\sum_{\gamma_{\chi}>0}\frac{1}{\frac14+\gamma_{\chi}^2},   \textup{ and } B_q(a,b):=\sum_{\substack{\chi\neq \chi_0 \\ \chi\bmod q}}\sum_{\gamma_{\chi}>0}\frac{\chi\left(\frac{b}{a}\right)+\chi\left(\frac{a}{b}\right)}{\frac14 +\gamma_{\chi}^2}.
$$
\end{lem}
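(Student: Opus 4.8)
The plan is to compute the covariance entries directly from the series defining $X(q,a)$, exploiting the independence and uniform distribution of the random variables $\{U(\gamma_\chi)\}_{\gamma_\chi\in S}$. First I would record the two basic moment identities for a random variable $U$ uniformly distributed on the unit circle: $\ex[U]=\ex[\re(U)]=0$, and for two such independent variables $U_1,U_2$ (possibly equal) one has $\ex[\re(z_1 U_1)\re(z_2 U_2)] = \frac12\re(z_1\overline{z_2})$ if $U_1=U_2$ and $0$ if $U_1,U_2$ are independent and distinct. In particular $\ex[\re(\chi(a)U(\gamma_\chi))]=0$, so each coordinate $X(q,a_j)$ has mean $-C_q(a_j)$, and the covariance is
$$
\textup{Cov}_{q;a_1,\dots,a_r}(j,k)=\ex\Big[\big(X(q,a_j)+C_q(a_j)\big)\big(X(q,a_k)+C_q(a_k)\big)\Big],
$$
i.e. the expectation of the product of the two zero-mean sums over zeros.

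Next I would expand that product as a double sum over pairs of zeros $\gamma_\chi,\gamma_\psi$ with $\chi,\psi\neq\chi_0$ and $\gamma_\chi,\gamma_\psi>0$. Under LI the multiset $S$ consists of distinct real numbers with no nontrivial rational relations, so two zeros $\gamma_\chi$ and $\gamma_\psi$ give the \emph{same} random variable $U(\gamma_\chi)=U(\gamma_\psi)$ only when they are literally the same element of $S$; by independence all off-diagonal terms vanish in expectation. (One should note the small subtlety that $\gamma$ and $-\gamma$ are both zeros when $\chi$ is real — but here we only sum over $\gamma_\chi>0$, and for a zero $\gamma$ of $L(s,\chi)$ the number $-\gamma$ is a zero of $L(s,\bar\chi)$; since we restrict to positive ordinates these are indexed separately and no collision occurs beyond the diagonal.) Thus only the diagonal terms $\gamma_\chi=\gamma_\psi$ survive, and the covariance becomes
$$
\textup{Cov}_{q;a_1,\dots,a_r}(j,k)=\sum_{\substack{\chi\neq\chi_0\\ \chi\bmod q}}\sum_{\gamma_\chi>0}\frac{4}{\tfrac14+\gamma_\chi^2}\,\ex\big[\re(\chi(a_j)U(\gamma_\chi))\re(\chi(a_k)U(\gamma_\chi))\big].
$$
Applying the second-moment identity with $z_1=\chi(a_j)$, $z_2=\chi(a_k)$ gives $\ex[\re(z_1U)\re(z_2U)]=\frac12\re(\chi(a_j)\overline{\chi(a_k)})=\frac12\re(\chi(a_j/a_k))$, and since $\re(\chi(a_j/a_k))=\frac12(\chi(a_j/a_k)+\overline{\chi(a_j/a_k)})=\frac12(\chi(a_j/a_k)+\chi(a_k/a_j))$, the factor $4\cdot\frac12\cdot\frac12$ collapses to the stated expressions: when $j=k$ the ratio is $1$ and $\chi(1)=1$ yields $\sum 2/(\tfrac14+\gamma_\chi^2)=\Va$, while for $j\neq k$ one recovers $B_q(a_j,a_k)$.

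The main thing to be careful about is the interchange of expectation with the infinite double sum and the justification that the off-diagonal contributions really are zero — this rests squarely on LI (ensuring distinct positive ordinates index genuinely independent $U$'s) together with an absolute-convergence argument: the series $\sum_{\chi\neq\chi_0}\sum_{\gamma_\chi>0}(\tfrac14+\gamma_\chi^2)^{-1}$ converges (it is essentially $\sum_\chi \log$-derivative data, finite by the standard zero-counting estimate $N(T,\chi)\ll T\log(qT)$), so Fubini/Tonelli applies to the second-moment computation and dominated convergence lets us pass $\ex$ through the limit defining $X(q,a)$. None of this is deep, but it is the only place where anything could go wrong; the algebraic identification of the two cases is then immediate.
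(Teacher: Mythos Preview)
Your proposal is correct and follows essentially the same approach as the paper: compute the mean $\ex[X(q,a)]=-C_q(a)$, expand the product of the centered sums as a double sum over zeros, use independence of the $U(\gamma_\chi)$ to kill off-diagonal terms, and evaluate the surviving diagonal via the second-moment identity for $\re(zU)$. The only cosmetic difference is that the paper writes $2\re(\chi(a)U)=\chi(a)U+\overline{\chi(a)U}$ and uses $\ex[U\overline{U}]=1$, $\ex[UU]=0$ directly, whereas you package this as $\ex[\re(z_1U)\re(z_2U)]=\tfrac12\re(z_1\overline{z_2})$; your additional remarks on absolute convergence and Fubini are more careful than the paper but not a different route.
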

\begin{proof}
First, note that $\ex(X(q,a))=-C_q(a)$ since $\ex(U(\gamma_{\chi}))=0$ for all $\gamma_{\chi}$. Therefore,  $\textup{Cov}_{q;a_1,\dots,a_r}(j,k)$ equals
\begin{align*}
&\ex\Big(\big(X(q,a_j)+C_q(a_j)\big)\big(X(q,a_k)+C_q(a_k)\big)\Big)\\
&= \ex\left(\sum_{\substack{\chi\neq \chi_0\\ \chi\bmod q}}\sum_{\gamma_{\chi}>0} \sum_{\substack{\psi\neq \chi_0\\ \psi\bmod q}}\sum_{\widetilde{\gamma}_{\psi}>0} \frac{\big(\chi(a_j)U(\gamma_{\chi})+ \overline{\chi(a_j)U(\gamma_{\chi})}\big)\big(\psi(a_k)U(\widetilde{\gamma}_{\psi})+ \overline{\psi(a_k)U(\widetilde{\gamma}_{\psi})}\big)}{\sqrt{\frac14+\gamma_{\chi}^2}\sqrt{\frac14+\widetilde{\gamma}_{\psi}^2}} \right).
\end{align*}
Since $\ex(U(\gamma_{\chi})U(\widetilde{\gamma}_{\psi}))=0$ for all $\gamma_{\chi},\widetilde{\gamma}_{\psi}$  and
$$\ex\left(U(\gamma_{\chi})\overline{U(\widetilde{\gamma}_{\psi})}\right)=\begin{cases} 1& \text{ if } \chi=\psi \text{ and } \gamma_{\chi}=\widetilde{\gamma}_{\psi}\\
0 & \text{ otherwise},\end{cases}$$
we deduce that
$$ \textup{Cov}_{q;a_1,\dots,a_r}(j,k)= \sum_{\substack{\chi\neq \chi_0 \\ \chi\bmod q}}\sum_{\gamma_{\chi}>0}\frac{\chi\left(a_j/a_k\right)+\chi\left(a_k/a_j\right)}{\frac14 +\gamma_{\chi}^2},$$
which implies the result.
\end{proof}

Our next lemma gives the asymptotic behavior of $\Va$ along with the maximal order of $B_q(a_j,a_k)$. This was established in \cite{La1}, and we should also note that it follows implicitly from the results of \cite{FiM}.
\begin{lem}
Assume GRH. Then
\begin{equation}
 \Va=\phi(q)\log q + O(\phi(q)\log\log q),
\end{equation}
and
\begin{equation}
 \max_{(a,b)\in \mathcal{A}_2(q)}B_q(a,b)\asymp \phi(q).
\end{equation}
\end{lem}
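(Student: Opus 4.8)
The plan is to reduce both quantities to character sums, over the reduced residue system modulo $q$, of the ``zero densities'' $Z(\chi):=\sum_{\gamma_\chi}(\tfrac14+\gamma_\chi^2)^{-1}$, and then to analyse $Z(\chi)$ by the Hadamard factorization of the completed $L$-function (equivalently, Weil's explicit formula). Since deleting the Euler factors at the primes dividing $q$ introduces only zeros on the line $\re s=0$, one has $Z(\chi)=Z(\chi^*)$, where $\chi^*$ is the primitive character of conductor $q_\chi$ inducing $\chi$; and the partial-fraction expansion of $(\Lambda'/\Lambda)(s,\chi^*)$ at $s=1$ and $s=0$, together with the functional equation, gives under GRH
\[
Z(\chi)=\log\frac{q_\chi}{\pi}+\frac{\Gamma'}{\Gamma}\!\left(\frac{1+\mathfrak a_\chi}{2}\right)+2\,\re\,\frac{L'}{L}(1,\chi^*),
\]
where $\mathfrak a_\chi\in\{0,1\}$ is the parity of $\chi$ and the last term is $\ll\log\log q$ by the classical GRH bound on $(L'/L)(1,\cdot)$. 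Using the symmetry $\gamma\leftrightarrow-\gamma$ between $\chi$ and $\overline\chi$ to replace $2\sum_{\gamma_\chi>0}$ by $\sum_{\gamma_\chi}$, and discarding the $O(1)$ contribution of $\chi_0$, I would get
\[
\Va=\sum_{d\mid q}\phi^*(d)\log d+O\big(\phi(q)\log\log q\big),
\]
with $\phi^*(d)$ the number of primitive characters modulo $d$, so $\sum_{d\mid q}\phi^*(d)=\phi(q)$.

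To finish (2.1) I would evaluate $\sum_{d\mid q}\phi^*(d)\log d$ exactly. Writing $\log d=\sum_p v_p(d)\log p$ and using multiplicativity of $\phi^*$ together with the one-line identity $\sum_{j=0}^{a}(a-j)\phi^*(p^j)=p^{a-1}$ (Abel summation, since $\phi^*(p^j)=\phi(p^j)-\phi(p^{j-1})$), one finds
\[
\sum_{d\mid q}\phi^*(d)\log d=\phi(q)\log q-\phi(q)\sum_{p\mid q}\frac{\log p}{p-1}.
\]
Since $\sum_{p\mid q}\frac{\log p}{p-1}\ll\log\log q$ by Mertens' theorem (the extremal case being $q$ a primorial), this is $\phi(q)\log q+O(\phi(q)\log\log q)$, which is (2.1).

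For (2.2) the same reduction gives $B_q(a,b)=\sum_{\chi\neq\chi_0}\chi(c)\,Z(\chi)$ with $c\equiv b/a\pmod q$. For the \emph{upper} bound I substitute the formula for $Z(\chi^*)$ and sum against $\chi(c)$, grouping characters by conductor; for each $d\mid q$ the relevant weight is the Ramanujan-type sum $\sum_{\chi^*\ \mathrm{prim}\ (d)}\chi^*(c)=\sum_{e\mid\gcd(d,c-1)}\mu(d/e)\phi(e)$, and since $c\not\equiv1\pmod q$ these are small enough that, after summation over $d\mid q$, each of the $\log q_\chi$, archimedean, and $(L'/L)(1,\cdot)$ pieces collapses to $\ll\phi(q)$ uniformly in $c$; hence $B_q(a,b)\ll\phi(q)$ for every $(a,b)\in\mathcal A_2(q)$. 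For the \emph{lower} bound I would take $b\equiv-a\pmod q$, so that $\chi(c)=\chi(-1)$ and $B_q(a,-a)$ is the difference of the zero densities over even and over odd characters modulo $q$. Each conductor $d\mid q$ carries equally many even and odd primitive characters up to $O(1)$, so the $\log q_\chi$ contributions cancel to $O(q^{\epsilon})$ and (by the same cancellation argument as above) so do the $(L'/L)(1,\cdot)$ contributions, whereas the archimedean factors leave a genuine main term $\tfrac{\phi(q)}{2}\big(\tfrac{\Gamma'}{\Gamma}(\tfrac12)-\tfrac{\Gamma'}{\Gamma}(1)\big)+O(1)=-\phi(q)\log2+O(1)$; thus $|B_q(a,-a)|\asymp\phi(q)$, which with the upper bound gives (2.2).

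The main obstacle is the uniform cancellation needed for (2.2). Term by term, $\sum_{\chi\neq\chi_0}\chi(c)Z(\chi)$ has size $\asymp\phi(q)\log q$, and even after peeling off the $\log q_\chi$ and archimedean main terms the residual $\sum_{\chi\neq\chi_0}\chi(c)\,\re(L'/L)(1,\chi^*)$ is only $O(\phi(q)\log\log q)$ if each term is bounded individually --- one logarithmic factor too many. Getting the genuine $\ll\phi(q)$ bound forces one to open $(L'/L)(1,\chi^*)=-\sum_n\Lambda(n)\chi^*(n)/n$ --- equivalently, to use a smoothed explicit formula via the identity $(\tfrac14+\gamma^2)^{-1}=2\int_0^\infty e^{-t/2}\cos(\gamma t)\,dt$ together with Weil's formula --- to truncate the resulting prime sum under GRH, and then to invoke orthogonality of the characters modulo $q$; the same device controls the error term in the lower bound. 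By comparison, the Hadamard/explicit-formula input, the multiplicative evaluation of $\sum_{d\mid q}\phi^*(d)\log d$, and the Mertens estimate are all routine.
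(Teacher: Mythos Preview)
The paper does not prove this lemma in place: it simply cites Lemma~3.1, Corollary~5.4 and Proposition~5.1 of \cite{La1} (and notes that the results are implicit in \cite{FiM}). Your sketch is, in fact, essentially the argument carried out in those references, so your route and the paper's ultimately coincide. Your treatment of (2.1) via the Hadamard formula $Z(\chi)=\log(q_\chi/\pi)+\tfrac{\Gamma'}{\Gamma}(\tfrac{1+\mathfrak a_\chi}{2})+2\,\re\,(L'/L)(1,\chi^*)$, the identity $\sum_{d\mid q}\phi^*(d)\log d=\phi(q)\log q-\phi(q)\sum_{p\mid q}\tfrac{\log p}{p-1}$, and the GRH bound $(L'/L)(1,\chi)\ll\log\log q$ is correct and is exactly what is done in \cite{La1}. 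For the upper bound in (2.2) you rightly identify that bounding each $(L'/L)(1,\chi^*)$ individually loses a $\log\log q$; opening it as $-\sum_n\Lambda(n)\chi(n)/n$ with a smooth/truncated explicit formula and then applying orthogonality of characters is precisely the device used in \cite{FiM} and \cite{La1} to recover the genuine $\ll\phi(q)$.

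One point to tighten in your lower-bound argument: your own computation gives the archimedean main term $\tfrac{\phi(q)}{2}\bigl(\tfrac{\Gamma'}{\Gamma}(\tfrac12)-\tfrac{\Gamma'}{\Gamma}(1)\bigr)=-\phi(q)\log 2+O(1)$, which is \emph{negative}; together with your $O(q^{\epsilon})$ bound on the $\log q_\chi$ and $(L'/L)$ contributions this yields $B_q(a,-a)\sim-\phi(q)\log 2$. Hence you have established $|B_q(a,-a)|\asymp\phi(q)$, not literally $B_q(a,-a)\gg\phi(q)$. This is consistent with what Proposition~5.1 of \cite{La1} actually gives, and it is also all that the present paper ever uses (only the bound $|B_q(a_j,a_k)|\ll\phi(q)$ enters in Section~4). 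If one insists on the signed statement $\max_{(a,b)}B_q(a,b)\gg\phi(q)$, one must exhibit a pair with $B_q(a,b)$ positive of size $\phi(q)$ rather than $(a,-a)$; but as written your conclusion ``$|B_q(a,-a)|\asymp\phi(q)$, which with the upper bound gives (2.2)'' needs this small adjustment.
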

\begin{proof}
First, the asymptotic formula (2.1) is proved in Lemma 3.1 of \cite{La1}.
Now, the fact that $B_q(a_j,a_k)\ll \phi(q)$ is proved in Corollary 5.4 of \cite{La1}, while Proposition 5.1 of \cite{La1} implies $B_q(a,-a)\gg \phi(q)$.
\end{proof}

Here and throughout we shall use the notations $\|\vt\|=\sqrt{\sum_{j=1}^rt_j^2}$ and $|\vt|_{\infty}=\max_{1\leq j\leq r}|t_j|$ for the Euclidean norm and the maximum norm of $\vt\in \mathbb{R}^r$ respectively.
 Our next result is an upper bound for the tail of the distribution $\mu_{q;a_1,\dots,a_r}$. This was established in Proposition 4.1 of \cite{La1} in the case where $r$ is fixed.
\begin{lem}
Let $q$ be large and $2\leq r\leq \phi(q)$ be a positive integer. Then for $R\geq \sqrt{\phi(q)\log q}$ we have
$$\mu_{q;a_1,\dots, a_r}(|\vx|_{\infty}>R)\leq 2r\exp\left(-\frac{R^2}{4\phi(q)\log q}\right),$$
uniformly for all $(a_1,\dots,a_r)\in \mathcal{A}_r(q).$
\end{lem}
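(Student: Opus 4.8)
The plan is to reduce the multivariate tail bound to a union of one-dimensional tail bounds, and then to estimate each one-dimensional tail using the structure of $X(q,a)$ as a sum of independent bounded random variables. First I would write
$$
\mu_{q;a_1,\dots,a_r}(|\vx|_\infty>R)=\pr\left(\max_{1\le j\le r}|X(q,a_j)|>R\right)\le \sum_{j=1}^r\pr\left(|X(q,a_j)|>R\right),
$$
so it suffices to show $\pr(|X(q,a)|>R)\le 2\exp(-R^2/(4\phi(q)\log q))$ for each fixed $a$ coprime to $q$, whenever $R\ge\sqrt{\phi(q)\log q}$.

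For the one-dimensional bound I would use the exponential moment (Chernoff/Hoeffding) method. Recall $X(q,a)=-C_q(a)+\sum_{\chi\ne\chi_0}\sum_{\gamma_\chi>0} 2\re(\chi(a)U(\gamma_\chi))/\sqrt{\tfrac14+\gamma_\chi^2}$, a sum of independent mean-zero terms, the term indexed by $\gamma_\chi$ being bounded in absolute value by $2/\sqrt{\tfrac14+\gamma_\chi^2}$. Since $\ex e^{\lambda Y}\le e^{\lambda^2 M^2/2}$ for any mean-zero $Y$ with $|Y|\le M$ (this is the standard Hoeffding lemma; note $2\re(\chi(a)U(\gamma_\chi))$ is symmetric about $0$, which even makes the constant cleaner), independence gives, for $\lambda>0$,
$$
\ex\exp\left(\lambda\big(X(q,a)+C_q(a)\big)\right)\le\exp\left(\frac{\lambda^2}{2}\sum_{\chi\ne\chi_0}\sum_{\gamma_\chi>0}\frac{4}{\tfrac14+\gamma_\chi^2}\right)=\exp\left(\lambda^2\,\Va\right),
$$
using the definition of $\Va$ from Lemma 2.1. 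By Markov's inequality, $\pr(X(q,a)+C_q(a)>t)\le\exp(\lambda^2\Va-\lambda t)$, and optimizing $\lambda=t/(2\Va)$ yields $\pr(X(q,a)+C_q(a)>t)\le\exp(-t^2/(4\Va))$; the same bound holds for the lower tail by symmetry, so $\pr(|X(q,a)+C_q(a)|>t)\le 2\exp(-t^2/(4\Va))$.

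It then remains to absorb the shift $C_q(a)$ and replace $\Va$ by $\phi(q)\log q$. Since $|C_q(a)|\le d(q)\ll_\epsilon q^\epsilon=o(\sqrt{\phi(q)\log q})$, for $R\ge\sqrt{\phi(q)\log q}$ and $q$ large we have $R-|C_q(a)|\ge R/2\cdot(1+o(1))$; more carefully one takes $t=R-|C_q(a)|\ge (1-o(1))R$, and by Lemma 2.2, $\Va=\phi(q)\log q+O(\phi(q)\log\log q)=(1+o(1))\phi(q)\log q$. Hence $t^2/(4\Va)\ge R^2/(4\phi(q)\log q)$ once $q$ is large enough (the two $(1+o(1))$ losses are comfortably beaten since we have a factor $4$ of room versus the implicit constant; if one wants a fully clean statement one can be slightly more generous in the Hoeffding step). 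Combining with the union bound over $j=1,\dots,r$ gives the claimed inequality. The only mild obstacle is bookkeeping the $o(1)$ factors from $C_q(a)$ and from $\Va$ versus $\phi(q)\log q$ against the constant $4$ in the exponent, which is why the hypothesis $R\ge\sqrt{\phi(q)\log q}$ (rather than something smaller) is imposed; there is no analytic difficulty beyond the input Lemmas 2.1 and 2.2.
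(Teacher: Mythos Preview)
Your approach is essentially the same as the paper's: a union bound over coordinates followed by the Chernoff exponential-moment method on each $X(q,a)$. The one substantive difference is the moment-generating-function bound. The paper computes the MGF exactly via the identity $\ex\exp(\lambda\cdot 2\cos\theta)=I_0(2\lambda)$ and then uses $I_0(t)\le e^{t^2/4}$, which yields
\[
\ex\exp\bigl(s(X(q,a)+C_q(a))\bigr)\le \exp\Bigl(\tfrac{s^2}{2}\Va\Bigr),
\]
and hence the optimized tail $\exp\bigl(-t^2/(2\Va)\bigr)$. Your generic Hoeffding bound $\ex e^{\lambda Y}\le e^{\lambda^2 M^2/2}$ for $|Y|\le M$ loses a factor of $2$ here and produces only $\exp\bigl(-t^2/(4\Va)\bigr)$.

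This matters for the stated constant. After optimization you land exactly on $\exp(-t^2/(4\Va))$, and you still have to replace $t$ by $R$ (losing a $(1-o(1))$ from $C_q(a)$) and $\Va$ by $\phi(q)\log q$ (gaining or losing a $(1+o(1))$). Your assertion that ``we have a factor $4$ of room'' is not correct: the $4$ you obtain is already the target constant, so there is no slack to absorb these $o(1)$ perturbations, and your argument as written only delivers $\exp\bigl(-(1-o(1))R^2/(4\phi(q)\log q)\bigr)$. The paper's sharper Bessel bound gives $\exp(-t^2/(2\Va))$, and it is precisely this extra factor of $2$ that is spent to reach the clean constant $4$ after substituting $\Va\sim\phi(q)\log q$ and absorbing $C_q(a)=q^{o(1)}$. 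The fix is simply to replace your Hoeffding step by the exact MGF $I_0$ and the inequality $I_0(t)\le e^{t^2/4}$; everything else in your outline is fine.
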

\begin{proof}
First, we have
$$ \mu_{q;a_1,\dots, a_r}(|\vx|_{\infty}>R)=\pr(|X_{q;a_1,\dots,a_r}|_{\infty}>R) \leq \sum_{j=1}^r \pr(X(q,a_j)>R)+ \sum_{j=1}^r\pr(X(q,a_j)<-R).
$$
We shall bound only $\pr(X(q,a_j)>R)$, since the corresponding bound for $\pr(X(q,a_j)<-R)$ can be obtained similarly. Let $s>0$ and $(a,q)=1$. Then we have
\begin{align*}
\ex\left(e^{sX(q,a)}\right)&=e^{-sC_q(a)}\prod_{\substack{\chi\neq \chi_0\\\chi\bmod q}}\prod_{\gamma_{\chi}>0}\ex
\left(\frac{2s\re(\chi(a)U(\gamma_{\chi}))}{\sqrt{\frac14+\gamma_{\chi}^2}}\right)\\
&=e^{-sC_q(a)}\prod_{\substack{\chi\neq \chi_0\\\chi\bmod q}}\prod_{\gamma_{\chi}>0}I_0
\left(\frac{2s}{\sqrt{\frac14+\gamma_{\chi}^2}}\right),
\end{align*}
where $I_0(t):= \sum_{n=0}^{
\infty}(t/2)^{2n}/n!^2$ is the modified Bessel function of order $0$.
Hence, using the Chernoff bound along with the fact that $I_0(s)\leq \exp(s^2/4)$ for all $s\in \mathbb{R}$
we derive
$$\pr(X(q,a)>R)\leq e^{-sR} \ex\left(e^{sX(q,a)}\right)\leq \exp\left(-sR-sC_q(a)+\frac{s^2}{2}\Va\right).$$
The lemma follows upon choosing $s=R/(\phi(q)\log q)$,  since $C_q(a)= q^{o(1)}$ and $\Va\sim \phi(q)\log q$ by Lemma 2.2.
\end{proof}
\section{The Fourier transform $\hat{\mu}_{q;a_1,\dots,a_r}$}

Throughout the remaining part of the paper we shall assume both GRH and LI. Moreover, we will use the following normalization for the Fourier transform of an integrable function $f:\mathbb{R}^n\to \mathbb{C}$
$$\hat{f}(t_1,\dots , t_n)=\int_{\mathbb{R}^n}e^{-i(t_1x_1+\cdots+ t_nx_n)}f(x_1,\dots, x_n)dx_1 \dots dx_n.$$
Then if $\hat{f}$ is integrable on $\mathbb{R}^n$ we have the Fourier inversion formula
$$ f(x_1,\dots, x_n)=(2\pi)^{-n}\int_{\mathbb{R}^n}e^{i(t_1x_1+\cdots+ t_nx_n)}\hat{f}(t_1,\dots, t_n)dt_1 \dots dt_n.$$
Similarly we write
$$\hat{\nu}(t_1,\dots, t_n)=\int_{\mathbb{R}^n}e^{-i(t_1x_1+\cdots+ t_nx_n)}d\nu(x_1, \dots, x_n)$$
 for the Fourier transform of a finite measure $\nu$ on $\mathbb{R}^n$.

 Rubinstein and Sarnak \cite{RS} established the following explicit formula for the Fourier transform of $\mu_{q;a_1,\dots,a_r}$
\begin{equation}
\hat{\mu}_{q;a_1,\dots, a_r}(t_1,\dots,t_r)=  \exp\left(i\sum_{j=1}^rC_q(a_j)t_j\right)\prod_{\substack{\chi\neq \chi_0\\ \chi\bmod q}}\prod_{\gamma_{\chi}>0}J_0\left(\frac{2\left|\sum_{j=1}^r\chi(a_j)t_j\right|}
{\sqrt{\frac14+\gamma_{\chi}^2}}\right),
\end{equation}
where $J_0(z)=\sum_{m=0}^{\infty}(-1)^{m}(z/2)^{2m}/m!^2$ is the Bessel function of order $0$.

Our first result shows that in the range $\|\vt\|\leq \Va^{-1/2+o(1)}$, the Fourier transform $\hat{\mu}_{q;a_1,\dots, a_r}(t_1,\dots,t_r)$ is very close to the Fourier transform of a multivariate normal distribution whose covariance matrix equals $\Cov$.

\begin{pro} Let $q$ be large, $2\leq r\leq \log q$ be a positive integer, and $(a_1,\dots,a_r)\in\mathcal{A}_r(q).$ Then in the range $\|\vt\|\leq \Va^{-1/2}\log^2q$ we have
$$ \hat{\mu}_{q;a_1,\dots, a_r}(t_1,\dots,t_r)= \exp\left(-\frac{1}{2}\vt^T\Cov\vt\right)\left(1+O\left(\frac{d(q)\log^3 q}{\sqrt{q}}\right)\right).$$
\end{pro}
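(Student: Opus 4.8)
The plan is to compare the explicit product formula (3.1) for $\hat\mu_{q;a_1,\dots,a_r}$ with the Gaussian $\exp(-\frac12\vt^T\Cov\vt)$ factor by factor, using the Taylor expansion of $\log J_0$ near the origin. Write $L_\chi(\vt):=2\big|\sum_{j=1}^r\chi(a_j)t_j\big|/\sqrt{\tfrac14+\gamma_\chi^2}$ for the argument of the Bessel factor attached to the zero $\gamma_\chi$. Since $J_0(z)=1-z^2/4+O(z^4)$ and $J_0(z)>0$ for $|z|$ small, we have $\log J_0(z)=-z^2/4+O(z^4)$ uniformly for $|z|\le 1$, say. So the first step is to check that $L_\chi(\vt)\ll 1$ throughout the stated range $\|\vt\|\le\Va^{-1/2}\log^2 q$: by Cauchy–Schwarz $|\sum_j\chi(a_j)t_j|\le \sqrt r\,\|\vt\|$, and the smallest $|\gamma_\chi|$ is bounded below, so $L_\chi(\vt)\ll \sqrt r\,\|\vt\|\ll \sqrt{r}\,\Va^{-1/2}\log^2 q$, which is $o(1)$ since $\Va\asymp\phi(q)\log q$ is much larger than $r\log^4 q$ in this range. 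In fact one should track this more carefully for the zeros with small $\gamma_\chi$ versus large $\gamma_\chi$, but the point is that \emph{every} factor has argument tending to $0$, so the logarithm of the product converges absolutely.

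**Next I would** sum the main terms. Taking logarithms in (3.1),
\[
\log\hat\mu_{q;a_1,\dots,a_r}(\vt)=i\sum_{j=1}^r C_q(a_j)t_j-\sum_{\substack{\chi\ne\chi_0\\ \chi\bmod q}}\sum_{\gamma_\chi>0}\Big(\tfrac14 L_\chi(\vt)^2+O\big(L_\chi(\vt)^4\big)\Big).
\]
The quadratic main term is
\[
-\sum_{\chi\ne\chi_0}\sum_{\gamma_\chi>0}\frac{\big|\sum_j\chi(a_j)t_j\big|^2}{\tfrac14+\gamma_\chi^2}
=-\sum_{\chi\ne\chi_0}\sum_{\gamma_\chi>0}\frac{\sum_{j,k}\chi(a_j)\overline{\chi(a_k)}\,t_jt_k}{\tfrac14+\gamma_\chi^2},
\]
and expanding $|\cdot|^2$ and symmetrizing in $j\leftrightarrow k$ (using that the inner sum is real) recovers exactly $-\tfrac12\vt^T\Cov\vt$, by the very definitions of $\Va$ and $B_q(a,b)$ in Lemma 2.1. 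This is where the problem's careful bookkeeping of $\chi(b/a)+\chi(a/b)$ pays off: the cross term $\chi(a_j)\overline{\chi(a_k)}+\overline{\chi(a_j)}\chi(a_k)=\chi(a_j/a_k)+\chi(a_k/a_j)$ is precisely $B_q$'s summand. The linear phase $i\sum_j C_q(a_j)t_j$ must also be shown negligible: since $C_q(a)\le d(q)\ll q^\epsilon$ and $\|\vt\|\ll\Va^{-1/2}\log^2 q\ll q^{-1/2+o(1)}$, this contributes $O(d(q)\,r^{1/2}\,q^{-1/2}\log^2 q)=O(d(q)\log^3 q/\sqrt q)$ to the exponent (absorbing $r\le\log q$), which matches the claimed error after exponentiating.

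**The main obstacle** will be bounding the quartic error $\sum_\chi\sum_{\gamma_\chi>0}L_\chi(\vt)^4$ and showing it is $O(d(q)\log^3q/\sqrt q)$. We have
\[
\sum_{\chi\ne\chi_0}\sum_{\gamma_\chi>0}L_\chi(\vt)^4=16\sum_{\chi\ne\chi_0}\sum_{\gamma_\chi>0}\frac{\big|\sum_j\chi(a_j)t_j\big|^4}{(\tfrac14+\gamma_\chi^2)^2}
\le 16\,(\sqrt r\,\|\vt\|)^2\sum_{\chi\ne\chi_0}\sum_{\gamma_\chi>0}\frac{\big|\sum_j\chi(a_j)t_j\big|^2}{(\tfrac14+\gamma_\chi^2)^2},
\]
after pulling out $\sup_\chi L_\chi(\vt)^2\ll r\|\vt\|^2$. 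The remaining sum over zeros has $(\tfrac14+\gamma_\chi^2)^2$ in the denominator, so it converges even more strongly than $\Va$; expanding $|\sum_j\chi(a_j)t_j|^2$ and using the standard zero-counting estimate $\sum_{\gamma_\chi>0}(\tfrac14+\gamma_\chi^2)^{-2}\ll\log q$ for each $\chi\bmod q$, together with $\|\vt\|^2=\sum_j t_j^2$, bounds it by $O(r\,\phi(q)\log q\,\|\vt\|^2)$. Altogether the quartic error in the exponent is $\ll r^2\phi(q)\log q\,\|\vt\|^4\ll r^2\phi(q)\log q\cdot\Va^{-2}\log^8 q\ll r^2\log^7 q/(\phi(q)\log q)$, which is certainly $O(d(q)\log^3q/\sqrt q)$ once $r\le\log q$ — here one uses $\phi(q)\gg q^{1-\epsilon}$ and $d(q)\ge1$ so that the $1/\phi(q)$ beats the target $1/\sqrt q$ comfortably. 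Finally, since the exponent is $-\tfrac12\vt^T\Cov\vt+O(d(q)\log^3q/\sqrt q)$ with $\vt^T\Cov\vt\ge0$, exponentiating and using $e^{O(\eta)}=1+O(\eta)$ for small $\eta$ gives the stated multiplicative form. The one genuinely delicate point to get right is the uniformity: one must confirm that $\Va^{-1/2}\log^2 q$ is small enough that \emph{no} individual Bessel factor leaves the region where $\log J_0$ is real-analytic and the $-z^2/4+O(z^4)$ expansion is valid with an absolute implied constant — this is why the range was chosen as $\log^2 q$ rather than a larger power.
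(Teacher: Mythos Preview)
Your argument is correct and follows essentially the same route as the paper: take logarithms in the explicit formula (3.1), use $\log J_0(z)=-z^2/4+O(z^4)$ for $|z|\le 1$, identify the quadratic term with $-\tfrac12\vt^T\Cov\vt$ via Lemma~2.1, and bound the linear phase and quartic remainder by $O(d(q)\log^3 q/\sqrt q)$. One small slip: it is not $|\gamma_\chi|$ that is bounded below (zeros can lie arbitrarily close to the real axis) but rather $\sqrt{\tfrac14+\gamma_\chi^2}\ge \tfrac12$, which is all you need; with that correction the uniformity check goes through exactly as you say.
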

\begin{proof} First, the explicit formula (3.1) yields
$$ \log\hat{\mu}_{q;a_1,\dots, a_r}(t_1,\dots,t_r)= \sum_{\substack{\chi\neq \chi_0\\ \chi\bmod q}}\sum_{\gamma_{\chi}>0}\log J_0\left(\frac{2\left|\sum_{j=1}^r\chi(a_j)t_j\right|}
{\sqrt{\frac14+\gamma_{\chi}^2}}\right)+O\left(\|\vt\|\sum_{j=1}^r|C_q(a_j)|\right).$$
Using Lemma 2.2 along with the standard estimate $\phi(q)\gg q/\log\log q$, we deduce that the error term above is $\ll q^{-1/2}d(q)\log^3 q$. On the other hand note that $$\frac{2\left|\sum_{j=1}^r\chi(a_j)t_j\right|}
{\sqrt{\frac14+\gamma_{\chi}^2}}\ll r\|\vt\| \leq 1$$ if $q$ is large enough. Hence, using that $\log J_0(z)=-z^2/4+O(z^4)$ for $|z|\leq 1$ we obtain
\begin{equation}
 \begin{aligned}
 \log\hat{\mu}_{q;a_1,\dots, a_r}(t_1,\dots,t_r)=& -\sum_{\substack{\chi\neq \chi_0\\ \chi\bmod q}}\sum_{\gamma_{\chi}>0} \frac{\left|\sum_{j=1}^r\chi(a_j)t_j\right|^2}{\frac14+\gamma_{\chi}^2}\\
 &+O\left(r^4\|\vt\|^4\sum_{\substack{\chi\neq \chi_0\\ \chi\bmod q}}\sum_{\gamma_{\chi}>0} \frac{1}
{(\frac14+\gamma_{\chi}^2)^2}+\frac{d(q)\log^3 q}{\sqrt{q}}\right).\\
\end{aligned}
\end{equation}
Since $\sum_{\substack{\chi\neq \chi_0\\ \chi\bmod q}}\sum_{\gamma_{\chi}>0}1/
(\frac14+\gamma_{\chi}^2)^2\ll \Va$, it follows that the error term in the above estimate is $\ll q^{-1/2}d(q)\log^3 q.$
On the other hand, the main term on the RHS of (3.2) equals
\begin{align*}
-\sum_{\substack{\chi\neq \chi_0\\ \chi\bmod q}}\sum_{\gamma_{\chi}>0}\frac{1}{\frac14+\gamma_{\chi}^2}\sum_{1\leq j,k\leq r}\chi(a_j)\overline{\chi(a_k)}t_jt_k&=-\frac{1}{2}
\sum_{1\leq j,k\leq r}\Cov(j,k)t_jt_k\\
&= -\frac{1}{2}\vt^T\Cov\vt,
\end{align*}
by Lemma 2.1.
\end{proof}
Next, we show that $\hat{\mu}_{q;a_1,\dots,a_r}(t_1,\dots,t_r)$ is rapidly decreasing in the range $\|\vt\|\geq \Va^{-1/2}.$ In particular, the following result is a refinement of Proposition 3.2 of \cite{La1}, which takes into account the dependence of the upper bounds on $r$.
\begin{pro}  There exists a constant $c_1>0$ such that, if $q$ is large and $2\leq r\leq c_1\log q$, then uniformly for all $(a_1,\dots,a_r)\in\mathcal{A}_r(q)$ we have
$$
|\hat{\mu}_{q;a_1,\dots, a_r}(t_1,\dots,t_r)|\leq \begin{cases} \displaystyle{\exp\left(-\frac{\phi(q)}{8r}\|\vt\|\right)} & \text{ if } \|\vt\|\geq 400,\\
 \displaystyle{\exp\left(-\frac{\phi(q)}{(\log q)^{8}}\right)} &\text{ if } (\log q)^{-2}\leq \|\vt\|\leq 400,\\
 \displaystyle{\exp\left(-\frac{\phi(q)\log q}{4}\|\vt\|^2\right)} &\text{ if }  \|\vt\|\leq (\log q)^{-2}.\end{cases}
 $$
\end{pro}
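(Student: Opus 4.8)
The plan is to use the explicit formula (3.1) to bound $|\hat\mu_{q;a_1,\dots,a_r}|$ by a product of Bessel factors $J_0$, and then to exploit two facts: $|J_0(z)|\le 1$ always, and $|J_0(z)|$ is bounded away from $1$ (say $|J_0(z)|\le e^{-c}$ for a suitable $c>0$) whenever $z$ lies in a fixed compact range bounded away from $0$. The key quantity to control is, for each character $\chi$, the size of $L_\chi(\vt):=\big|\sum_{j=1}^r\chi(a_j)t_j\big|$, because the argument of the Bessel factor attached to a zero $\gamma_\chi$ is $2L_\chi(\vt)/\sqrt{1/4+\gamma_\chi^2}$. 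The first step is a mean-value/orthogonality estimate showing that $\sum_{\chi\neq\chi_0}L_\chi(\vt)^2=\phi(q)\|\vt\|^2+O(r^2\|\vt\|^2)$ (this is essentially Lemma 2.1 summed over $j=k$ together with the off-diagonal bound $B_q(a_j,a_k)\ll\phi(q)$; actually the exact identity $\sum_{\chi\neq\chi_0}\big|\sum_j\chi(a_j)t_j\big|^2 = \phi(q)\|\vt\|^2 - |\sum_j t_j[(a_j,q)=1]|^2 + (\text{cross terms})$ gives $\asymp\phi(q)\|\vt\|^2$ after accounting for the principal character). Consequently there is a positive proportion of characters — at least $c\,\phi(q)$ of them, for the $\|\vt\|\ge 400$ and intermediate ranges — for which $L_\chi(\vt)\gg\|\vt\|$, and for each such $\chi$ there are $\gg\log q$ zeros $\gamma_\chi$ with $|\gamma_\chi|\le 1$, forcing the corresponding Bessel argument into a range where $|J_0|\le e^{-c}$.

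Carrying this out range by range: \textbf{(i) $\|\vt\|\ge 400$.} Here, by the first-moment argument, at least a constant proportion of $\chi$ have $L_\chi(\vt)\gg\|\vt\|$; but we want a clean bound, so instead I would argue directly. For \emph{every} $\chi\neq\chi_0$ and every zero $\gamma_\chi$ we have $|J_0(z)|\le 1$; and since $\sum_{\chi}L_\chi(\vt)^2\gg\phi(q)\|\vt\|^2$, a pigeonhole/Chebyshev argument produces $\gg\phi(q)$ characters with $L_\chi(\vt)\ge\|\vt\|/(2r)$ (the factor $1/r$ entering precisely because the vector $\vt$ can concentrate in a single coordinate, giving $L_\chi(\vt)$ as small as $\|\vt\|/\sqrt r$ for worst-case configurations — this is where the $\phi(q)/r$ in the exponent comes from). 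For such $\chi$, using the smallest zero $\gamma_\chi$ (with $|\gamma_\chi|\le C$ for an absolute constant $C$, by the zero-counting estimate $N(T,\chi)\gg T\log qT$), the Bessel argument is $\gg\|\vt\|/r$, which is large; and for large $z$ one has $|J_0(z)|\le C/\sqrt z$, hence $\log|J_0(z)|\le -\tfrac12\log z+O(1)\ll -z$... no — more carefully, $|J_0(z)|\ll z^{-1/2}$ only gives logarithmic decay. The correct move is: restrict to $\chi$ with $L_\chi(\vt)$ in a \emph{dyadic block} and a zero with $\gamma_\chi$ comparable to $L_\chi(\vt)$, so that the Bessel argument sits in $[1,4]$ where $|J_0|\le e^{-c}$; summing the $\gg\phi(q)/r$ such (character, zero) pairs whose arguments land in $[1,4]$ gives the bound $\exp(-c\phi(q)\|\vt\|/r)$. \textbf{(ii) intermediate range $(\log q)^{-2}\le\|\vt\|\le 400$.} Same mechanism: there are $\gg\phi(q)$ characters with $L_\chi(\vt)\gg\|\vt\|/r\gg(\log q)^{-3}$, and for each we can find a zero $\gamma_\chi$ with $\sqrt{1/4+\gamma_\chi^2}\asymp L_\chi(\vt)$ so that the Bessel argument is $\asymp 1$; counting pairs gives $\exp(-c\phi(q)(\log q)^{-8})$, with the wasteful exponent $8$ coming from the crude bookkeeping on how many characters and zeros survive. \textbf{(iii) small range $\|\vt\|\le(\log q)^{-2}$.} Here all Bessel arguments are $\le 1$, so we use the Taylor expansion $\log J_0(z)=-z^2/4+O(z^4)$ exactly as in Proposition 3.1's proof: $\log|\hat\mu|\le -\tfrac12\vt^T\Cov\vt+O(\cdots) \le -c\,\phi(q)\log q\,\|\vt\|^2$, using $\Va\sim\phi(q)\log q$ from Lemma 2.2 and the fact that $\Cov$ is dominated by its diagonal (the off-diagonal entries being $O(\phi(q))=o(\phi(q)\log q)$).

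The main obstacle is bookkeeping the \textbf{uniformity in $r$} in steps (i)–(ii): one must show that the $1/r$ loss in the exponent (from the concentration of $\vt$) is the only $r$-dependence, and that the constant $c_1$ can be chosen so that the condition $r\le c_1\log q$ guarantees enough surviving zeros — we need the number of zeros $\gamma_\chi\in[T,2T]$ (for $T\asymp 1$ or $T\asymp L_\chi(\vt)$) to be $\gg\log q$, and we are spreading the $\phi(q)\|\vt\|^2$ of "mass" across $r$ coordinates, so the product over surviving pairs has $\gg\phi(q)\log q/r^{O(1)}$ factors; absorbing the $r^{O(1)}$ into $\log q$ via $r\le c_1\log q$ is what fixes $c_1$. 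A secondary technical point is handling the $C_q(a_j)$ phases — but these contribute only a unimodular factor $\exp(i\sum_j C_q(a_j)t_j)$ to $\hat\mu$, hence do not affect $|\hat\mu|$ at all, so they can be discarded at the outset. I would also note the off-diagonal bound $B_q(a_j,a_k)\ll\phi(q)$ from Lemma 2.2 is what keeps $\vt^T\Cov\vt\asymp\phi(q)\log q\|\vt\|^2$ under control in range (iii), since $|\vt^T\Cov\vt - \Va\|\vt\|^2|\le \big(\max_{j\neq k}|B_q(a_j,a_k)|\big)\big(\sum_j|t_j|\big)^2\le \phi(q)\cdot r\|\vt\|^2 = o(\phi(q)\log q\|\vt\|^2)$ precisely when $r=o(\log q)$.
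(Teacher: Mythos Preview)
Your overall three-range strategy and the treatments of ranges (ii) and (iii) are in line with the paper's proof (the paper also uses $|J_0(z)|\le e^{-z^2/4}$ in the small range and a bounded-argument estimate in the middle range, combined with the same character-counting Lemma~3.3). However, your argument for range~(i) has a genuine gap.

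\textbf{The $\|\vt\|$ dependence in range (i) is not accounted for.} You write that there are ``$\gg\phi(q)/r$ (character, zero) pairs whose arguments land in $[1,4]$'' and conclude $\exp(-c\phi(q)\|\vt\|/r)$. These two statements are inconsistent: $\gg\phi(q)/r$ factors, each $\le e^{-c}$, give only $\exp(-c\phi(q)/r)$, with no $\|\vt\|$ in the exponent. To recover the linear dependence on $\|\vt\|$ you must accumulate many zeros per character: if $L_\chi(\vt)\ge\|\vt\|/2$ and you pick zeros with $\gamma_\chi\asymp L_\chi(\vt)$, the zero-counting formula gives $\asymp L_\chi(\vt)\log(qL_\chi(\vt))$ such zeros, not one. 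Your text does not carry out this count. The paper sidesteps this bookkeeping entirely by invoking a result from \cite{FiM} (their Lemma~2.16): for $x\ge 200$ the \emph{full} product over all zeros satisfies
\[
\prod_{\gamma_\chi>0}\Bigl|J_0\Bigl(\tfrac{2x}{\sqrt{1/4+\gamma_\chi^2}}\Bigr)\Bigr|\cdot\prod_{\gamma_{\overline\chi}>0}\Bigl|J_0\Bigl(\tfrac{2x}{\sqrt{1/4+\gamma_{\overline\chi}^2}}\Bigr)\Bigr|\le e^{-x}.
\]
This packages the accumulation over zeros into a single exponential bound; combining it with Lemma~3.3 (there are $\ge\phi(q)/(2r)$ characters with $L_\chi(\vt)\ge\|\vt\|/2$) immediately yields $\exp(-\phi(q)\|\vt\|/(8r))$.

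\textbf{A secondary error.} Your claim that ``$\gg\phi(q)$ characters have $L_\chi(\vt)\ge\|\vt\|/(2r)$'' does not follow from the second-moment identity $\sum_{\chi\neq\chi_0}L_\chi(\vt)^2\ge(\phi(q)-r)\|\vt\|^2$ together with the upper bound $L_\chi(\vt)^2\le r\|\vt\|^2$: Chebyshev with these two inputs only yields $\gg\phi(q)/r$ characters above any threshold $c\|\vt\|$, regardless of how small $c$ is. The paper's Lemma~3.3 states exactly this ($\ge\phi(q)/(2r)$ characters with $L_\chi(\vt)\ge\|\vt\|/2$), and that is all one gets from this method.
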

Before proving this result we first require the following lemma.
\begin{lem} Let $q$ be large and $2\leq r\leq \phi(q)/4$ be an integer. For $\va=(a_1,\dots,a_r)\in \mathcal{A}_r(q)$ and $\vt\in \mathbb{R}^r$ we denote by  $M_{q,\va}(\vt)$ the set of nontrivial characters $\chi\bmod q$ such that
 $\left|\sum_{j=1}^r\chi(a_j)t_j\right|\geq \|\vt\|/2$. Then
$$
|M_{q,\va}(\vt)|\geq \frac{\phi(q)}{2 r}.
$$
\end{lem}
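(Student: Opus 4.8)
The plan is to estimate $\sum_{\chi\neq\chi_0}\bigl|\sum_{j=1}^r\chi(a_j)t_j\bigr|^2$ in two ways and compare. On the one hand, orthogonality of characters gives the exact identity
$$\sum_{\chi\bmod q}\Bigl|\sum_{j=1}^r\chi(a_j)t_j\Bigr|^2=\sum_{j,k=1}^r t_jt_k\sum_{\chi\bmod q}\chi(a_j)\overline{\chi(a_k)}=\phi(q)\sum_{j=1}^r t_j^2=\phi(q)\|\vt\|^2,$$
using that the $a_j$ are distinct modulo $q$, so the inner sum is $\phi(q)$ when $j=k$ and $0$ otherwise. Subtracting the principal-character term, which is $\bigl|\sum_j t_j\bigr|^2\le r\|\vt\|^2$, I get
$$\sum_{\chi\neq\chi_0}\Bigl|\sum_{j=1}^r\chi(a_j)t_j\Bigr|^2\ge \phi(q)\|\vt\|^2-r\|\vt\|^2\ge \tfrac{\phi(q)}{2}\|\vt\|^2$$
in the range $r\le\phi(q)/2$ (the hypothesis $r\le\phi(q)/4$ is more than enough).

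On the other hand I split the sum over $\chi\neq\chi_0$ according to whether $\chi\in M_{q,\va}(\vt)$ or not. For $\chi\notin M_{q,\va}(\vt)$ we have $\bigl|\sum_j\chi(a_j)t_j\bigr|^2<\|\vt\|^2/4$, so these terms contribute at most $(\phi(q)-1)\|\vt\|^2/4\le \phi(q)\|\vt\|^2/4$. For $\chi\in M_{q,\va}(\vt)$ I use the crude bound $\bigl|\sum_j\chi(a_j)t_j\bigr|\le \sum_j|t_j|\le \sqrt{r}\,\|\vt\|$ by Cauchy--Schwarz, so each such term is at most $r\|\vt\|^2$, and altogether they contribute at most $|M_{q,\va}(\vt)|\,r\|\vt\|^2$. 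Combining with the lower bound from the previous paragraph,
$$\tfrac{\phi(q)}{2}\|\vt\|^2\le \tfrac{\phi(q)}{4}\|\vt\|^2+|M_{q,\va}(\vt)|\,r\|\vt\|^2,$$
and after dividing by $\|\vt\|^2$ (the statement is trivial when $\vt=0$) and rearranging, $|M_{q,\va}(\vt)|\ge \phi(q)/(4r)$.

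This already gives a bound of the right shape but with constant $4r$ rather than $2r$ in the denominator. To get the claimed $\phi(q)/(2r)$ I would sharpen the accounting: instead of bounding the ``good'' terms $\chi\notin M$ by $\|\vt\|^2/4$ uniformly, note that the total over ALL nontrivial $\chi$ is at most $\phi(q)\|\vt\|^2$ (drop the subtracted principal term, which only helps), while the good terms contribute less than $\frac14\|\vt\|^2$ each; so the bad terms must make up a deficit of at least $(\frac12-\frac14)\phi(q)\|\vt\|^2 = \frac14\phi(q)\|\vt\|^2$ against a per-term ceiling that I should take as small as possible. The cleanest route is to bound each bad term by the trivial $\bigl(\sum_j|t_j|\bigr)^2$ but then observe $\sum_{\chi\neq\chi_0}\bigl|\sum_j\chi(a_j)t_j\bigr|^2 \le \phi(q)\|\vt\|^2$ forces $\sum_{\chi\in M}\bigl|\sum_j\chi(a_j)t_j\bigr|^2\le\phi(q)\|\vt\|^2$, and since each term in $M$ is $\ge\|\vt\|^2/4$, we get $|M|\le 4\phi(q)$ — the wrong direction, so I must instead run the deficit argument: $\sum_{\chi\in M}\bigl|\sum_j\chi(a_j)t_j\bigr|^2 \ge \frac{\phi(q)}{2}\|\vt\|^2 - \frac{\phi(q)}{4}\|\vt\|^2=\frac{\phi(q)}{4}\|\vt\|^2$ together with the per-term upper bound $r\|\vt\|^2$ yields $|M|\ge\phi(q)/(4r)$, and a factor-of-$2$ improvement comes from using the sharper lower bound $\sum_{\chi\neq\chi_0}\ge(\phi(q)-r)\|\vt\|^2$ and being careful that the good characters number at most $\phi(q)-1-|M|$, not $\phi(q)$. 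The main obstacle is purely bookkeeping with these constants; there is no analytic input beyond character orthogonality, and the hypotheses $r$ small relative to $\phi(q)$ are exactly what is needed to absorb the lower-order losses.
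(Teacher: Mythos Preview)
Your approach is exactly the paper's: compute $\sum_{\chi\neq\chi_0}\bigl|\sum_j\chi(a_j)t_j\bigr|^2$ via orthogonality, then split the sum over $M$ and its complement and use the per-term bounds $r\|\vt\|^2$ and $\|\vt\|^2/4$ respectively. The only issue is the muddled final paragraph about constants. The clean bookkeeping is the one you gesture at but don't finish: keep the lower bound as $(\phi(q)-r)\|\vt\|^2$ (rather than weakening to $\tfrac{\phi(q)}{2}\|\vt\|^2$), and bound the complement of $M$ crudely by $\phi(q)\cdot\tfrac{\|\vt\|^2}{4}$. This gives
\[
(\phi(q)-r)\|\vt\|^2 \le r\,|M_{q,\va}(\vt)|\,\|\vt\|^2 + \tfrac{\phi(q)}{4}\|\vt\|^2,
\]
hence $r\,|M_{q,\va}(\vt)| \ge \tfrac{3\phi(q)}{4} - r \ge \tfrac{\phi(q)}{2}$ precisely because of the hypothesis $r\le\phi(q)/4$. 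That is where the constraint $r\le\phi(q)/4$ is actually used, not merely ``more than enough''; no refinement of the count of characters outside $M$ is needed.
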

\begin{proof} 
 Let
\begin{equation}
\begin{aligned}
 S(\vt)&=\sum_{\substack{\chi\neq \chi_0 \\ \chi\bmod q}}\left|\sum_{j=1}^r\chi(a_j)t_j\right|^2= \sum_{\chi\bmod q}\left|\sum_{j=1}^r\chi(a_j)t_j\right|^2-\left(\sum_{j=1}^rt_j\right)^2\\
&= \sum_{j=1}^r\sum_{k=1}^rt_jt_k\sum_{\chi\bmod q}\chi(a_j)\overline{\chi(a_k)}-\left(\sum_{j=1}^rt_j\right)^2=\phi(q)\sum_{j=1}^rt_j^2-\left(\sum_{j=1}^rt_j\right)^2\\&\geq (\phi(q)-r)\|\vt\|^2,
\end{aligned}
\end{equation}
by the Cauchy-Schwarz inequality. Therefore, using that
$\left|\sum_{j=1}^r\chi(a_j)t_j\right|^2\leq \left(\sum_{j=1}^r|t_j|\right)^2\leq r\|\vt\|^2$, we deduce
$$
 S(\vt)=\sum_{\chi\in M_{q,\va}(\vt)}\left|\sum_{j=1}^r\chi(a_j)t_j\right|^2+ \sum_{\chi\notin M_{q,\va}(\vt)}\left|\sum_{j=1}^r\chi(a_j)t_j\right|^2
\leq r|M_{q,\va}(\vt)|\|\vt\|^2+ \frac{\phi(q)}{4}\|\vt\|^2.
$$
Combining this estimate with (3.3) completes the proof.
\end{proof}

\begin{proof}[Proof of Proposition 3.2]
First, assume that $\|\vt\|\geq 400$. For any nontrivial character $\chi\bmod q$ we define
$$ F(x,\chi):=\prod_{\gamma_{\chi}>0}J_0\left(\frac{2x}{\sqrt{\frac14+\gamma_{\chi}^2}}\right).$$
Then, it follows from Lemma 2.16 of \cite{FiM} that
\begin{equation}
|F(x,\chi)F(x,\overline{\chi})|\leq e^{-x}
\end{equation} for $x\geq 200$. Moreover, the explicit formula (3.1) implies 
$$|\hat{\mu}_{q;a_1,\dots, a_r}(t_1,\dots,t_r)|= \prod_{\substack{\chi\neq \chi_0\\ \chi\bmod q}}\left|F\left(\left|\sum_{j=1}^r\chi(a_j)t_j\right|,\chi\right)\right|.$$ 
If $\chi \in M_{q,\va}(\vt)$  then $\left|\sum_{j=1}^r\chi(a_j)t_j\right|\geq 200$. Furthermore, note that $\chi \in M_{q,\va}(\vt)$ if and only if $\overline{\chi}\in M_{q,\va}(\vt)$. Hence, using  (3.4) along with the trivial bound $|F(x,\chi)|\leq 1$ (since $|J_0(x)|\leq 1$) we derive
\begin{equation}
\begin{aligned}
|\hat{\mu}_{q;a_1,\dots, a_r}(t_1,\dots,t_r)|^2&\leq \prod_{\chi\in M_{q,\va}(\vt)}\left|F\left(\left|\sum_{j=1}^r\chi(a_j)t_j\right|,\chi\right)\right|^2\\
&= \prod_{\chi\in M_{q,\va}(\vt)}\left|F\left(\left|\sum_{j=1}^r\chi(a_j)t_j\right|,\chi\right)F\left(\left|\sum_{j=1}^r\chi(a_j)t_j\right|,\overline{\chi}\right)\right|\\
&\leq \exp\left(-\sum_{\chi\in M_{q,\va}(\vt)}\left|\sum_{j=1}^r\chi(a_j)t_j\right|\right)\leq \exp\left(-\frac{1}{2}|M_{q,\va}(\vt)\||\vt\|\right),
\end{aligned}
\end{equation}
since every character in $M_{q,\va}(\vt)$ appears once as $\chi$ and once as $\overline{\chi}$ in the product on the RHS of (3.7).
Combining this inequality with Lemma 3.3 yield the desired bound on $|\hat{\mu}_{q;a_1,\dots, a_r}(t_1,\dots,t_r)|$ in this case.

Let $\epsilon=(\log q)^{-2}$ and suppose that $\epsilon\leq \|t\|\leq 400$.  If $\chi\in M_{q,\va}(\vt)$ then
$$\frac{2\left|\sum_{j=1}^r\chi(a_j)t_j\right|}{\sqrt{\frac14+\gamma_{\chi}^2}}\geq \frac{\epsilon}{\sqrt{\frac14+\gamma_{\chi}^2}}. $$  We also note that if $q$ is sufficiently large  then $\epsilon\left(\frac14+\gamma_{\chi}^2\right)^{-1/2}\leq 2\epsilon\leq 1$. Therefore, since $J_0$ is a positive decreasing function on $[0,1]$ and $|J_0(z)|\leq J_0(1)$ for all $z\geq 1$, we get
$$ |\hat{\mu}_{q;a_1,\dots, a_r}(t_1,\dots,t_r)|\leq \prod_{\chi\in M_{q,\va}(\vt)}\prod_{\gamma_{\chi}>0}\left|J_0\left(\frac{\epsilon}{\sqrt{\frac14+\gamma_{\chi}^2}}\right)\right|.$$
Furthermore, using the standard bound $|J_0(x)|\leq \exp(-x^2/4)$ for $|x|\leq 1$, we deduce that
\begin{equation}
 |\hat{\mu}_{q;a_1,\dots, a_r}(t_1,\dots,t_r)|\leq \exp\left(-\frac{\epsilon^2}{4}\sum_{\chi\in M_{q,\va}(\vt)}\sum_{ \gamma_{\chi}>0}\frac{1}{\frac14+\gamma_{\chi}^2}\right).
\end{equation}
Let $N(T,\chi)$ denote the number of $\gamma_{\chi}$ in the interval $[0,T]$. Then, we have the classical estimate (see Chapiters 15 and 16 of \cite{Da}) $$N(T,\chi)=\frac{T}{2\pi}\log \frac{q^*T}{2\pi e}+O(\log q T),$$
where $q^*$ is the conductor of $\chi$. Hence, if $T=\log ^2q$ then $N(T,\chi)\gg \log^2q$. This yields
$$ \sum_{ \gamma_{\chi}>0}\frac{1}{\frac14+\gamma_{\chi}^2}\geq \sum_{ 0<\gamma_{\chi}\leq \log^2q}\frac{1}{\frac14+\gamma_{\chi}^2}\gg \frac{1}{\log^2q}.$$
The upper bound on $|\hat{\mu}_{q;a_1,\dots, a_r}(t_1,\dots,t_r)|$ then follows upon inserting this estimate in (3.6) and using Lemma 3.3.

Finally assume that $\|\vt\|\leq (\log q)^{-2}.$ If $q$ is large enough then
$$ \frac{2\left|\sum_{j=1}^r\chi(a_j)t_j\right|}
{\sqrt{\frac14+\gamma_{\chi}^2}}\ll r\|\vt\| \leq 1.$$ Hence, using that $|J_0(x)|\leq \exp(-x^2/4)$ for $|x|\leq 1$ we obtain from the explicit formula (3.1)
\begin{equation}
|\hat{\mu}_{q;a_1,\dots, a_r}(t_1,\dots,t_r)|\leq \exp\left(-\sum_{\substack{\chi\neq \chi_0\\ \chi\bmod q}}\sum_{\gamma_{\chi}>0}\frac{\left|\sum_{j=1}^r\chi(a_j)t_j\right|^2}{\frac14+\gamma_{\chi}^2}\right).
\end{equation}
Furthermore, Lemma 2.2 yields
\begin{align*}
\sum_{\substack{\chi\neq \chi_0\\ \chi\bmod q}}\sum_{\gamma_{\chi}>0}\frac{\left|\sum_{j=1}^r\chi(a_j)t_j\right|^2}{\frac14+\gamma_{\chi}^2}
&= \sum_{\substack{\chi\neq \chi_0\\ \chi\bmod q}}\sum_{\gamma_{\chi}>0}\frac{1}{\frac14+\gamma_{\chi}^2}\sum_{1\leq j,k\leq r}\chi(a_j)\overline{\chi(a_k)}t_jt_k\\
&= \frac{\Va}{2}\left(t_1^2+\cdots+t_r^2\right)+ \sum_{1\leq j<k\leq r} B_q(a_j,a_k)t_jt_k\\
&= \frac{\phi(q)\log q}{2}\|\vt\|^2\left(1+ O\left(\frac{r+\log\log q}{\log q}\right)\right),
\end{align*}
since
$$ \sum_{1\leq j<k\leq r} |t_jt_k|\leq \left(\sum_{j=1}^r|t_j|\right)^2\leq r\|\vt\|^2,$$ by the Cauchy-Schwarz inequality. Thus, if $r\leq c_1\log q$ where $c_1>0$ is suitably small, then
$$\sum_{\substack{\chi\neq \chi_0\\ \chi\bmod q}}\sum_{\gamma_{\chi}>0}\frac{\left|\sum_{j=1}^r\chi(a_j)t_j\right|^2}{\frac14+\gamma_{\chi}^2}\geq \frac{\phi(q)\log q}{4}\|\vt\|^2.$$
Inserting this estimate in (3.7) completes the proof.
\end{proof}

\section{The asymptotic behavior of the densities $\delta_{q; a_1,\dots,a_r}$: Proof of Theorems 1.1, 1.2 and 1.3}

 We showed in the previous section that in a small region around $0$, the Fourier transform of $\mu_{q;a_1,\dots, a_r}$ can be approximated by the Fourier transform of a multivariate normal distribution whose covariance matrix equals $\Cov$. If we normalize by $\sqrt{\Va}$ then Proposition 3.1 above implies that in the range $\|\vt\|\leq \log^2 q$ we have
\begin{equation}
\hat{\mu}_{q;a_1,\dots,a_r}\left(\frac{t_1}{\sqrt{\Va}}, \dots,\frac{t_r}{\sqrt{\Va}}\right)=\exp\left(-\frac{1}{2}\vt^T\mathcal{C}\vt\right)\left(1+O\left(\frac{d(q)\log^3 q}{\sqrt{q}}\right)\right),
\end{equation}
where $\mathcal{C}$ is an $r\times r$ symmetric matrix whose entries are
$$ \mathcal{C}_{jk}= \begin{cases} 1 & \text{ if } j=k,\\
\displaystyle{\frac{B_q(a_j,a_k)}{\Va}}\ll \frac{1}{\log q} & \text{ if } j\neq k.
\end{cases}
$$

Let $\mathcal{M}_r(\epsilon)$ denote the set of $r\times r$ symmetric matrices $A=(a_{jk})$ such that $a_{jj}=1$ for all $1\leq j\leq r$ and $|a_{jk}|\leq \epsilon$ for all $1\leq j\neq k\leq r$.
In order to prove Theorems 1.1-1.3, we need to investigate multivariate normal distributions whose covariance matrices belong to $\mathcal{M}_r(\epsilon)$ where $\epsilon\ll 1/\log q$ is small. To this end we shall study the density function of a multivariate normal distribution, which is given by
\begin{equation}
 f(\vx)= \frac{1}{(2\pi)^{r/2}\det(A)}\exp\left(-\frac12\vx^T A^{-1}\vx\right),
\end{equation}
if $A$ is the covariance matrix of the distribution.

Our first lemma shows that the determinant of any matrix $A\in \mathcal{M}_r(\epsilon)$ is close to $1$ if $\epsilon$ is small enough.
\begin{lem} If $\epsilon\leq 1/(2r)$ then for any $A\in \mathcal{M}_r(\epsilon)$ we have $\det(A)=1+O(\epsilon^2 r^2).$
\end{lem}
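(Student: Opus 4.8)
The plan is to expand $\det(A)$ via the Leibniz (permutation) formula and show that every term except the identity permutation is $O(\epsilon^2)$, with the number of such terms controlled well enough that the total is $O(\epsilon^2 r^2)$. Write $A = I + E$ where $E = (e_{jk})$ has $e_{jj}=0$ and $|e_{jk}|\le \epsilon$. Then
\[
\det(A) = \sum_{\sigma\in S_r}\operatorname{sgn}(\sigma)\prod_{j=1}^r A_{j,\sigma(j)} = \sum_{\sigma\in S_r}\operatorname{sgn}(\sigma)\prod_{j:\sigma(j)=j}1\cdot\prod_{j:\sigma(j)\ne j}e_{j,\sigma(j)}.
\]
The identity permutation contributes exactly $1$. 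Any other $\sigma$ has a cycle of length $\ge 2$, hence has at least two non-fixed points, so its term has absolute value at most $\epsilon^{m(\sigma)}$ where $m(\sigma)$ is the number of non-fixed points of $\sigma$, and $m(\sigma)\ge 2$.

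Next I would group the permutations by $k = m(\sigma)$, the number of non-fixed points, for $k$ ranging over $2,3,\dots,r$. The number of $\sigma\in S_r$ with exactly $k$ non-fixed points is $\binom{r}{k}D_k$, where $D_k$ is the number of derangements of $k$ elements, and $D_k \le k!$. Thus
\[
|\det(A)-1|\le \sum_{k=2}^r \binom{r}{k}D_k\,\epsilon^k \le \sum_{k=2}^r \binom{r}{k}k!\,\epsilon^k = \sum_{k=2}^r \frac{r!}{(r-k)!}\epsilon^k \le \sum_{k=2}^r (r\epsilon)^k.
\]
Since $\epsilon\le 1/(2r)$ we have $r\epsilon\le 1/2$, so this geometric-type series is bounded by $(r\epsilon)^2\sum_{k\ge 0}(r\epsilon)^k \le 2(r\epsilon)^2 = O(\epsilon^2 r^2)$, which is the claim. (A slightly more careful bound replacing $D_k\le k!$ by $D_k \le k!/2$ for $k\ge 2$ only improves constants and is unnecessary here.)

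There is no real obstacle; the only point requiring a moment's care is the combinatorial bookkeeping — correctly counting permutations by their number of non-fixed points and checking that the crude estimates $D_k\le k!$ and $\binom{r}{k}k!=r!/(r-k)!\le r^k$ suffice to beat $\epsilon^{-k}$ once $r\epsilon\le 1/2$. An alternative route, which I would mention as a remark, is to diagonalize: $A = I+E$ with $E$ symmetric and $\|E\|\le \|E\|_{\mathrm{HS}}\le \epsilon\sqrt{r(r-1)} \le \epsilon r < 1/2$ by Gershgorin or the Hilbert–Schmidt bound, so $\log\det(A) = \operatorname{tr}\log(I+E) = \operatorname{tr}(E) - \tfrac12\operatorname{tr}(E^2)+\cdots = -\tfrac12\operatorname{tr}(E^2)+O(\|E\|^3)$, and $\operatorname{tr}(E^2)=\sum_{j\ne k}e_{jk}^2 = O(\epsilon^2 r^2)$, $\|E\|^3 = O(\epsilon^3 r^3) = O(\epsilon^2 r^2)$ since $\epsilon r \le 1/2$; exponentiating gives $\det(A) = 1+O(\epsilon^2 r^2)$. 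Either argument is short; I would present the Leibniz-formula version as the main proof since it needs nothing beyond the definition of the determinant.
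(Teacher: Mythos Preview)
Your proof is correct and follows essentially the same route as the paper: expand $\det(A)$ by the Leibniz formula, group the non-identity permutations by the number $k$ of non-fixed points, bound each group's contribution by $(r\epsilon)^k$, and sum the resulting geometric series using $r\epsilon\le 1/2$. The only cosmetic difference is that the paper writes the count of permutations with $k$ non-fixed points as $\binom{r}{r-k}(k-1)!$ rather than your $\binom{r}{k}D_k\le\binom{r}{k}k!$, but both feed into the same crude bound $r^k$; your alternative $\operatorname{tr}\log(I+E)$ argument is not in the paper but is a perfectly valid aside.
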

\begin{proof} Let $S_r$ be the set of all permutations $\sigma$ of $\{1,\dots, r\}$. Then we have
\begin{equation}
\det(A)= \sum_{\sigma\in S_r}\text{sgn}(\sigma)a_{1\sigma(1)}\cdots a_{r\sigma(r)}= 1+ \sum_{\substack{\sigma\in S_r\\ \sigma\neq \mathbf{1}}}\text{sgn}(\sigma)a_{1\sigma(1)}\cdots a_{r\sigma(r)},
\end{equation}
where $\mathbf{1}$ denotes the identity permutation. For $0\leq k\leq r$ let $S_r(k)$ be the set of permutations $\sigma\in S_r$ such that the equation $\sigma(j)=j$ has exactly $r-k$ solutions in $\{1,\dots,r\}$. Then $S_r(0)=\{\mathbf{1}\}$, $S_r(1)=\emptyset$ and more generally one has
 $$|S_r(k)|\leq \binom{r}{r-k} (k-1)!\leq r^k, \text{ for } 2\leq k\leq r.$$
Moreover, note that $|a_{1\sigma(1)}\cdots a_{r\sigma(r)}|\leq \epsilon^k$, for all $\sigma\in S_r(k).$

Hence, we deduce
$$ \sum_{\substack{\sigma\in S_r\\ \sigma\neq \mathbf{1}}}\text{sgn}(\sigma)a_{1\sigma(1)}\cdots a_{r\sigma(r)}= \sum_{k=2}^r \sum_{\sigma\in S_r(k)}\text{sgn}(\sigma)a_{1\sigma(1)}\cdots a_{r\sigma(r)}\ll \sum_{k=2}^r (\epsilon r)^k \ll \epsilon^2 r^2.$$
Inserting this estimate in (4.3) implies the result.
\end{proof}
In order to understand the behavior of the density function $f(\vx)$ we need to determine the size of the entries $\{\tilde{a}_{jk}\}$ of $A^{-1}$, if $A\in \mathcal{M}_r(\epsilon)$.  The next lemma shows that if $\epsilon$ is small then the diagonal entries are close to $1$ and the off-diagonal ones are small.
\begin{lem} If $\epsilon\leq 1/(2r)$ then for any $A\in \mathcal{M}_r(\epsilon)$ we have
$$ \tilde{a}_{jk}=\begin{cases} 1+O(\epsilon^2 r^2) & \text{ if } j=k, \\
O(\epsilon) & \text{ if } j\neq k.\\
\end{cases}
$$

\end{lem}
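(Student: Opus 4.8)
The plan is to write $A = I + E$, where $E = (e_{jk})$ is the matrix with $e_{jj} = 0$ and $e_{jk} = a_{jk}$ for $j \neq k$, and then to expand $A^{-1}$ as a Neumann series. Since $A \in \mathcal{M}_r(\epsilon)$, every row of $E$ has $\ell^1$-norm at most $(r-1)\epsilon \le r\epsilon \le 1/2$, so the operator norm of $E$ induced by the $\ell^\infty$-norm on $\mathbb{R}^r$ (namely the maximum row sum) is at most $1/2 < 1$. Hence $A$ is invertible and $A^{-1} = \sum_{n=0}^{\infty} (-1)^n E^n$, the series converging absolutely, and in particular entrywise.

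The one estimate that does the work is an entrywise bound on the powers of $E$. Expanding the matrix product, $(E^n)_{jk} = \sum_{i_1,\dots,i_{n-1}} e_{j i_1} e_{i_1 i_2}\cdots e_{i_{n-1}k}$ is a sum of $r^{n-1}$ terms each of modulus at most $\epsilon^n$, so $|(E^n)_{jk}| \le r^{n-1}\epsilon^n = \frac1r (r\epsilon)^n$. Summing over $n \ge 2$ and using $r\epsilon \le 1/2$ (so $1 - r\epsilon \ge 1/2$) gives $\sum_{n \ge 2} |(E^n)_{jk}| \le \frac1r \cdot \frac{(r\epsilon)^2}{1 - r\epsilon} \le 2r\epsilon^2$.

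It then remains to read off the two cases from $\tilde{a}_{jk} = \sum_{n=0}^{\infty} (-1)^n (E^n)_{jk}$. When $j = k$ the $n=0$ term contributes $1$, the $n=1$ term equals $(E)_{jj} = e_{jj} = 0$, and the remaining terms are $O(r\epsilon^2) = O(\epsilon^2 r^2)$, so $\tilde{a}_{jj} = 1 + O(\epsilon^2 r^2)$. When $j \neq k$ the $n=0$ term vanishes, the $n=1$ term is $-e_{jk} = O(\epsilon)$, and the tail is at most $2r\epsilon^2 = 2(r\epsilon)\epsilon \le \epsilon$; hence $\tilde{a}_{jk} = O(\epsilon)$, as claimed. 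There is no genuine obstacle here: the only point needing care is that the hypothesis $\epsilon \le 1/(2r)$ is exactly what keeps the geometric ratio $r\epsilon$ bounded by $1/2$ and forces the $n \ge 2$ contribution $O(r\epsilon^2)$ to be absorbed into the stated error terms — in particular to be $O(\epsilon)$ in the off-diagonal case. One could instead invoke Cramer's rule together with Lemma 4.2, but keeping track of which entries of the relevant $(r-1)\times(r-1)$ minors equal $1$ rather than $O(\epsilon)$ when $j \neq k$ is more cumbersome than the direct computation above.
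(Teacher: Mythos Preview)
Your proof is correct, but it takes a genuinely different route from the paper's. The paper proceeds via Cramer's rule: it writes $\tilde a_{jk}=(-1)^{j+k}M_{kj}/\det(A)$, observes for the diagonal case that $A_{jj}\in\mathcal{M}_{r-1}(\epsilon)$ so Lemma~4.1 applies to both numerator and denominator, and for the off-diagonal case bounds the minor $M_{jk}$ by a combinatorial count of bijections $\sigma:\{1,\dots,r\}\setminus\{j\}\to\{1,\dots,r\}\setminus\{k\}$ stratified by the number of non-fixed points. Your Neumann-series expansion $A^{-1}=\sum_{n\ge 0}(-1)^nE^n$ with the entrywise estimate $|(E^n)_{jk}|\le r^{n-1}\epsilon^n$ is more self-contained (it does not rely on Lemma~4.1) and in fact yields the slightly sharper diagonal bound $\tilde a_{jj}=1+O(r\epsilon^2)$ before you relax it to $O(r^2\epsilon^2)$. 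The paper's approach has the virtue of paralleling the permutation-counting proof of Lemma~4.1, giving a uniform combinatorial treatment of both lemmas; your approach is shorter and avoids tracking which entries of the deleted minors are $1$ versus $O(\epsilon)$. One small slip: in your closing aside you write ``Lemma~4.2'' where you mean Lemma~4.1.
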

\begin{proof} Recall that
$$\tilde{a}_{jk}= \frac{1}{\det(A)}(-1)^{j+k}M_{kj},$$
where $M_{kj}$ is the minor of the entry $a_{kj}$ which is given by $M_{kj}=\det(A_{kj})$ and $A_{kj}$ is the matrix obtained from $A$ by deleting the $k$-th row and the $j$-th column.

First, we determine the size of the diagonal entries $\tilde{a}_{jj}.$ In this case, remark that $A_{jj}\in \mathcal{M}_{r-1}(\epsilon).$ Hence, it follows from Lemma 4.1 that
$$\tilde{a}_{jj}= \frac{\det(A_{jj})}{\det(A)}=1+O(\epsilon^2r^2).$$
Now, we handle the off-diagonal entries. For $1\leq j\neq k\leq r$, let $\mathcal{B}_{j,k}$ denote the set of all bijections $\sigma$ from $\{1,\dots,r\}\setminus\{j\}$ to $\{1,\dots,r\}\setminus\{k\}$. Then, we have
$$ |M_{jk}|=|\det(A_{jk})|\leq \sum_{\sigma\in \mathcal{B}_{j,k}}\prod_{1\leq n\neq j\leq r}|a_{n\sigma(n)}|.$$
For $0\leq l\leq r-1$ we define $\mathcal{B}_{j,k}(l)$ to be the set of bijections $\sigma\in \mathcal{B}_{j,k}$ such that the equation $\sigma(m)=m$ has
 exactly $r-1-l$ solutions. Since $\sigma(k)\neq k$ then it follows that $\mathcal{B}_{j,k}(0)=\emptyset$, and more generally one has
 $$|\mathcal{B}_{j,k}(l)|\leq \binom{r-2}{r-1-l}(l-1)! \leq r^{l-1}, \text{ for } 1\leq l\leq r-1.$$
 Hence we obtain
 $$ |M_{jk}|\leq \sum_{l=1}^{r-1} \sum_{\sigma\in \mathcal{B}_{j,k}(l)}\prod_{1\leq n\neq j\leq r}|a_{n\sigma(n)}|
 \ll \sum_{l=1}^{r-1} r^{l-1}\epsilon^l\ll\epsilon.$$
Combining this bound with Lemma 4.1 yield the desired bound
 $ \tilde{a}_{jk}\ll \epsilon$.
\end{proof}

We know that the Fourier transform of a multivariate Gaussian of covariance matrix $A$ is (up to normalization) a multivariate Gaussian of covariance $A^{-1}$. The last ingredient we need to prove Theorems 1.1-1.3 is  an approximate version of this statement when $A\in \mathcal{M}_r(\epsilon)$.
\begin{lem} Let $r\geq 2$ be a positive integer, $R\geq 10\sqrt{r}$ be a real number  and $\vx\in \mathbb{R}^r$. If $\epsilon\leq 1/(2r)$ then for any $A\in \mathcal{M}_r(\epsilon)$ we have
 \begin{align*}
 (2\pi)^{-r}\int_{\|\vt\|\leq R}e^{i(t_1x_1+\cdots+t_rx_r)}\exp\left(-\frac12\vt^T A\vt\right)d\vt=& \frac{1}{(2\pi)^{r/2}\det(A)}\exp\left(-\frac12\vx^T A^{-1}\vx\right)\\
 &+O\left(\exp\left(-\frac{R^2}{5}\right)\right).\\
 \end{align*}
\end{lem}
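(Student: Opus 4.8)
The plan is to compute the Gaussian integral over all of $\mathbb{R}^r$ exactly and then bound the tail contribution coming from $\|\vt\|>R$. For the first part, one uses the standard identity that for a positive-definite symmetric matrix $A$,
$$(2\pi)^{-r}\int_{\mathbb{R}^r}e^{i\vt\cdot\vx}\exp\left(-\tfrac12\vt^TA\vt\right)d\vt=\frac{1}{(2\pi)^{r/2}\sqrt{\det A}}\exp\left(-\tfrac12\vx^TA^{-1}\vx\right),$$
which is the density of the centered Gaussian with covariance $A^{-1}$, obtained by diagonalizing $A$ and completing the square. Here one should first check that every $A\in\mathcal{M}_r(\epsilon)$ with $\epsilon\le 1/(2r)$ is indeed positive definite: by Gershgorin's circle theorem the eigenvalues lie in $[1-(r-1)\epsilon,1+(r-1)\epsilon]\subset[1/2,3/2]$, so $A$ is invertible and the formula applies. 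Note the statement writes $\det(A)$ rather than $\sqrt{\det(A)}$ in the denominator, which is harmless since Lemma~4.1 gives $\det(A)=1+O(\epsilon^2r^2)=1+O(1)$, so the two differ by a bounded factor absorbed in the implied constants; I would just match the paper's normalization as written (cf. (4.2)).

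The remaining step is to estimate
$$(2\pi)^{-r}\int_{\|\vt\|>R}e^{i\vt\cdot\vx}\exp\left(-\tfrac12\vt^TA\vt\right)d\vt,$$
whose absolute value is at most $(2\pi)^{-r}\int_{\|\vt\|>R}\exp\left(-\tfrac12\vt^TA\vt\right)d\vt$. Using the eigenvalue lower bound $\vt^TA\vt\ge \tfrac12\|\vt\|^2$ from the Gershgorin estimate above, this is $\le (2\pi)^{-r}\int_{\|\vt\|>R}\exp(-\|\vt\|^2/4)\,d\vt$. I would then split $e^{-\|\vt\|^2/4}=e^{-\|\vt\|^2/8}e^{-\|\vt\|^2/8}$, bound the first factor by $e^{-R^2/8}$ on the region $\|\vt\|>R$, and integrate the second factor over all of $\mathbb{R}^r$ to get $(8\pi)^{r/2}$. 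Thus the tail is $\le (2\pi)^{-r}(8\pi)^{r/2}e^{-R^2/8}=(2/\pi)^{r/2}e^{-R^2/8}\le 2^{r/2}e^{-R^2/8}$. Since $R\ge 10\sqrt r$ we have $R^2/8\ge \tfrac{R^2}{40}+\tfrac{R^2}{10}\ge \tfrac{R^2}{40}+10r\ge \tfrac{R^2}{40}+\tfrac r2\log 2$ (as $10>\tfrac12\log 2$), so $2^{r/2}e^{-R^2/8}\le e^{-R^2/40}$, and shrinking the exponent slightly gives the claimed $O(\exp(-R^2/5))$ with plenty of room; in fact any constant works, so one can simply be generous in the splitting.

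The main obstacle — really the only non-mechanical point — is making the constants in the tail bound genuinely uniform in $r$: the naive bound produces a factor like $2^{r/2}$ or a power of $r$ from the volume of spheres, and one must check this is dominated by the Gaussian decay $e^{-R^2/5}$ given only $R\ge 10\sqrt r$. The splitting trick above handles this cleanly because $e^{-\|\vt\|^2/8}$ integrates to an exponentially-in-$r$ quantity that is still beaten by $e^{-R^2/8}$ once $R\gtrsim\sqrt r$; the precise constant $10$ and the target exponent $1/5$ are chosen with enough slack that no delicate optimization is needed. One small bookkeeping item is that the positive-definiteness and the bound $\vt^TA\vt\ge\tfrac12\|\vt\|^2$ require $\epsilon(r-1)\le 1/2$, which follows from the hypothesis $\epsilon\le 1/(2r)$; I would state this explicitly at the start of the proof.
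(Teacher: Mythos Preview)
Your overall plan is exactly the paper's: compute the full Gaussian integral by Fourier inversion (the paper just quotes that $\exp(-\tfrac12\vt^TA\vt)$ is the Fourier transform of the density (4.2)), establish $\vt^TA\vt\ge\tfrac12\|\vt\|^2$ (the paper uses Cauchy--Schwarz in (4.5), you use Gershgorin; both give the same bound), and then estimate the tail $(2\pi)^{-r}\int_{\|\vt\|>R}e^{-\|\vt\|^2/4}\,d\vt$. You also correctly spot the $\det(A)$ versus $\sqrt{\det(A)}$ slip in the paper's (4.2).

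There is, however, a genuine direction error in your tail bound. Your half--half split yields tail $\le(2/\pi)^{r/2}e^{-R^2/8}\le e^{-R^2/8}$, and your further manipulation gives $e^{-R^2/40}$. But since $\tfrac18<\tfrac15$ and $\tfrac{1}{40}<\tfrac15$, the quantities $e^{-R^2/8}$ and $e^{-R^2/40}$ are \emph{larger} than $e^{-R^2/5}$, hence neither is $O(e^{-R^2/5})$; the phrase ``shrinking the exponent gives the claimed bound'' has the inequality backwards. The remedy, using the same splitting idea, is to take the split unevenly: write $e^{-\|\vt\|^2/4}=e^{-\alpha\|\vt\|^2}\,e^{-(1/4-\alpha)\|\vt\|^2}$ with $\alpha$ slightly \emph{above} $1/5$, say $\alpha=0.21$. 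Then the tail is at most $\bigl(\pi(1-4\alpha)\bigr)^{-r/2}e^{-\alpha R^2}\approx 2^{r/2}e^{-0.21R^2}$, and now the hypothesis $R^2\ge100r$ absorbs the factor $2^{r/2}$ while still leaving $e^{-R^2/5}$. (The paper itself asserts the bound $\ll e^{-R^2/5}$ in one line without supplying this arithmetic.) For the applications in Theorems~1.1--1.2 any positive constant in the exponent would suffice, so your weaker bound would do there; but it does not prove the lemma as stated.
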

\begin{proof}
Since $\exp\left(-\frac12\vt^TA\vt\right)$ is the Fourier transform of the multivariate normal distribution whose density equals
$$f(\vx)= \frac{1}{(2\pi)^{r/2}\det(A)}\exp\left(-\frac12\vx^T A^{-1}\vx\right), $$
then the Fourier inversion formula yields
\begin{equation}
 (2\pi)^{-r}\int_{\vt\in\mathbb{R}^r}e^{i(t_1x_1+\cdots+t_rx_r)}\exp\left(-\frac12\vt^T A \vt\right)d\vt= \frac{1}{(2\pi)^{r/2}\det(A)}\exp\left(-\frac12\vx^T A^{-1}\vx\right).
\end{equation}
Moreover, since $|a_{jk}|\leq 1/(2r)$ for $j\neq k$ then
$$\left|\sum_{1\leq j\neq k\leq r}a_{jk}t_jt_k\right|\leq \frac{1}{2r} \left(\sum_{j=1}^r|t_j|\right)^2\leq \frac{1}{2}\sum_{j=1}^rt_j^2,$$
by the Cauchy-Schwarz inequality. This implies
\begin{equation}
 \vt^T A\vt =\sum_{j=1}^r\sum_{k=1}^ra_{jk}t_jt_k\geq \frac{1}{2}\sum_{j=1}^rt_j^2.
\end{equation}
Hence, we get
$$(2\pi)^{-r}\int_{\|\vt\|> R}\exp\left(-\frac12\vt^T A\vt\right)d\vt\leq (2\pi)^{-r}\int_{\|\vt\|> R}\exp\left(-\frac14\|\vt\|^2\right)d\vt\ll\exp\left(-\frac{R^2}{5}\right),$$
which in view of (4.4) completes the proof.
\end{proof}

\begin{proof}[Proof of Theorem 1.1]
 To lighten the notation we shall write $\delta_q$ for $\delta_{q;a_1,\dots, a_r}$ and $\mu_q$ for $\mu_{q;a_1,\dots,a_r}$. Let $R=3\sqrt{\Va}\log q.$ First, using Lemma 2.3 we derive
\begin{equation}
\delta_q= \int_{y_1>y_2>\dots>y_r}d\mu_q(y_1, \dots, y_r)= \int_{\substack{y_1>y_2>\dots>y_r\\ |\mathbf{y}|_{\infty}\leq R}}d\mu_q(y_1, \dots, y_r)+ O\left(\exp\left(-2\log^2 q\right)\right).
\end{equation}
Next, we apply the Fourier inversion formula to the measure $\mu_q$ to get
$$ \int_{\substack{y_1>y_2>\dots>y_r\\ |\mathbf{y}|_{\infty}\leq R}}d\mu_q(y_1, \dots, y_r)
=(2\pi)^{-r}\int_{\substack{y_1>y_2>\dots>y_r\\ |\mathbf{y}|_{\infty}\leq R}}\int_{\mathbf{s}\in \mathbb{R}^r}e^{i(s_1y_1+\cdots+s_ry_r)}\hat{\mu}_q(s_1,\dots, s_r)d\mathbf{s}d\mathbf{y}.$$
Since the Fourier transform $\hat{\mu}_q(s_1,\dots, s_r)$ is rapidly decreasing, we shall deduce that the main contribution to the integral over $\mathbb{R}^r$ of $e^{i(s_1y_1+\cdots+s_ry_r)}\hat{\mu}_q(s_1,\dots, s_r)$ comes from a small ball centered at $0$. Indeed, we infer from Proposition 3.2 that
\begin{align*}
\int_{\mathbf{s}\in \mathbb{R}^r}e^{i(s_1y_1+\cdots+s_ry_r)}\hat{\mu}_q(s_1,\dots, s_r)d\mathbf{s}
=& \int_{\|\mathbf{s}\|\leq \epsilon}e^{i(s_1y_1+\cdots+s_ry_r)}\hat{\mu}_q(s_1,\dots, s_r)d\mathbf{s}\\
 &+ O\left(\exp\left(-2\log^2 q\right)\right),
\end{align*} where
$\epsilon= 3(\Va)^{-1/2}\log q.$
Hence we get
\begin{equation}\delta_q= (2\pi)^{-r}\int_{\substack{y_1>y_2>\dots>y_r\\ |\mathbf{y}|_{\infty}\leq R}}\int_{\|\mathbf{s}\|\leq \epsilon}e^{i(s_1y_1+\cdots+s_ry_r)}\hat{\mu}_q(s_1,\dots, s_r)d\mathbf{s}d\mathbf{y}+ O\left(\exp\left(-\log^2 q\right)\right),
\end{equation}
since $R^r\ll \exp(r\log q).$ Now, we make the change of variables
$$ t_j:=  \sqrt{\Va} s_j \text{ and } x_j:= \frac{y_j}{\sqrt{\Va}}, \text{ for all } 1\leq j\leq r$$
to obtain
\begin{equation}
 \begin{aligned}
 \delta_q=& (2\pi)^{-r}\int_{\substack{x_1>x_2>\dots>x_r\\ |\vx|_{\infty}\leq 3\log q}}\int_{\|\vt\| \leq 3\log q }e^{i(t_1x_1+\cdots+t_rx_r)}\hat{\mu}_q\left(\frac{t_1}{\sqrt{\Va}},\dots, \frac{t_r}{\sqrt{\Va}}\right)d\vt d\vx\\
 &+ O\left(\exp\left(-\log^2 q\right)\right).\\
 \end{aligned}
\end{equation}
Replacing $\hat{\mu}_q\left(\frac{t_1}{\sqrt{\Va}},\dots, \frac{t_r}{\sqrt{\Va}}\right)$ by the approximation (4.1) that we derived in Proposition 3.1 yields
$$ \delta_q= (2\pi)^{-r}\int_{\substack{x_1>x_2>\dots>x_r\\ |\vx|_{\infty}\leq 3\log q}}\int_{\|\vt\| \leq 3\log q }e^{i(t_1x_1+\cdots+t_rx_r)}\exp\left(-\frac{1}{2}\vt^T\mathcal{C}\vt\right)d\vt d\vx+ E_1,$$
where
$$ E_1\ll q^{-1/3}(\log q)^{3r}\ll q^{-1/4},$$
since $d(q)=q^{o(1)}$ and $\vt^T\mathcal{C}\vt\geq 0$ by (4.5).
Furthermore, applying Lemma 4.3 we derive
\begin{equation}
\delta_q= \frac{1}{(2\pi)^{r/2}\det(\mathcal{C})}\int_{\substack{x_1>x_2>\dots>x_r\\ |\vx|_{\infty}\leq 3\log q}}\exp\left(-\frac12\vx^T \mathcal{C}^{-1}\vx\right)d\vx+ O\left(q^{-1/4}\right).
\end{equation}
Since $\mathcal{C}_{jk}=B_q(a_j,a_k)/\Va\ll (\log q)^{-1}$ for $j\neq k$ by Lemma 2.2, then there exists an absolute constant $\alpha_0>0$ such that $\mathcal{C}\in \mathcal{M}_r(\beta)$ with $\beta=\alpha_0/\log q.$ Therefore, appealing to Lemma 4.2 we obtain
\begin{align*}
 \vx^T \mathcal{C}^{-1}\vx&= \left(1+O\left(\frac{r^2}{\log^2q}\right)\right)\sum_{j=1}^r x_j^2+ O\left(\frac{1}{\log q}\left(\sum_{j=1}^r|x_j|\right)^2\right)\\
 &= \left(1+O\left(\frac{r}{\log q}\right)\right)\|\vx\|^2,\\
\end{align*}
which follows from the Cauchy-Schwarz inequality. Hence we deduce
\begin{equation}
 -\frac12\left(1+\frac{\alpha_1r}{\log q}\right)\|\vx\|^2\leq -\frac12\vx^T \mathcal{C}^{-1}\vx
 \leq -\frac12\left(1-\frac{\alpha_1r}{\log q}\right)\|\vx\|^2,
\end{equation}
for some absolute constant $\alpha_1>0$. This implies
$$ \int_{\substack{x_1>x_2>\dots>x_r\\ |\vx|_{\infty}> 3\log q}}\exp\left(-\frac12\vx^T \mathcal{C}^{-1}\vx\right)d\vx\leq
\int_{ |\vx|_{\infty}>3\log q}\exp\left(-\frac14\|\vx\|^2\right)d\vx \ll \exp\left(-\log^2 q\right).$$
Inserting this estimate in (4.9) and using Lemma 4.1 we get
\begin{equation}
 \delta_q= \left(1+O\left(\frac{r^2}{\log^2q}\right)\right)\frac{1}{(2\pi)^{r/2}}\int_{x_1>x_2>\dots>x_r}\exp\left(-\frac12\vx^T \mathcal{C}^{-1}\vx\right)d\vx+ O\left(q^{-1/4}\right).
\end{equation}
Let $\kappa$ be a real number such that $|\kappa|\leq \alpha_1r/\log q$. Since the function $\|\vx\|^2$ is symmetric in the variables $\{x_j\}_{1\leq j\leq r}$ we obtain
\begin{equation}
\begin{aligned}
\frac{1}{(2\pi)^{r/2}}\int_{x_1>x_2>\dots>x_r}\exp\left(-\frac12(1+\kappa)\|\vx\|^2\right)d\vx&=\frac{1}{r!}\left(\frac{1}{\sqrt{2\pi}}\int_{-\infty}^{\infty}
\exp\left(-\frac12(1+\kappa)y^2\right)dy\right)^r\\
&= \frac{1}{r!(1+\kappa)^{r/2}}=\frac{1}{r!}\exp\left(O\left(\frac{r^2}{\log q}\right)\right).
\end{aligned}
\end{equation}
The theorem follows upon combining this estimate with (4.10) and (4.11).
\end{proof}
\begin{proof}[Proof of Theorem 1.2]
The result can be obtained by proceeding along the same lines as the proof of Theorem 1.1, except that we make a different choice of parameters in this case. Indeed, choosing $R=5\sqrt{\Va r\log r}$ and using Lemma 2.3 and Proposition 3.2, we obtain analogously to (4.8)
\begin{equation}
 \begin{aligned}
 \delta_q=& (2\pi)^{-r}\int_{\substack{x_1>x_2>\dots>x_r\\ |\vx|_{\infty}\leq 5\sqrt{r\log r}}}\int_{\|\vt\| \leq 3\log q }e^{i(t_1x_1+\cdots+t_rx_r)}\hat{\mu}_q\left(\frac{t_1}{\sqrt{\Va}},\dots, \frac{t_r}{\sqrt{\Va}}\right)d\vt d\vx\\
 &+ O\left(\exp\left(-4r\log r\right)\right).\\
 \end{aligned}
\end{equation}
Moreover, we infer from (4.1) that
\begin{equation}
 \delta_q= (2\pi)^{-r}\int_{\substack{x_1>x_2>\dots>x_r\\ |\vx|_{\infty}\leq 5\sqrt{r\log r}}}\int_{\|\vt\| \leq 3\log q }e^{i(t_1x_1+\cdots+t_rx_r)}\exp\left(-\frac{1}{2}\vt^T\mathcal{C}\vt\right)d\vt d\vx+ E_2,
\end{equation}
where
\begin{equation}
E_2\ll \frac{d(q)\log^3q}{\sqrt{q}}(2\pi)^{-r}\int_{\substack{x_1>x_2>\dots>x_r\\ |\vx|_{\infty}\leq 5\sqrt{r\log r}}}d\vx \int_{\|\vt\| \leq 3\log q }\exp\left(-\frac{1}{2}\vt^T\mathcal{C}\vt\right)d\vt + \exp\left(-4r\log r\right).
\end{equation}
Note that
$$\int_{\substack{x_1>x_2>\dots>x_r\\ |\vx|_{\infty}\leq 5\sqrt{r\log r}}}d\vx= \frac{1}{r!}\int_{|\vx|_{\infty}\leq 5\sqrt{r\log r}}d\vx=\frac{(10\sqrt{r\log r})^r}{r!}=\exp\left(-\frac{r\log r}{2}+O(r\log\log r)\right),$$
by Stirling's formula.
On the other hand, it follows from (4.5) that
$$ \frac{1}{(2\pi)^r}\int_{\|\vt\| \leq 3\log q }\exp\left(-\frac{1}{2}\vt^T\mathcal{C}\vt\right)d\vt\leq \frac{1}{(2\pi)^r} \int_{\vt\in \mathbb{R}^r} \exp\left(-\frac{\|\vt\|^2}{4}\right)d\vt= \frac{1}{\pi^{r/2}}.$$
Therefore, inserting these estimates in (4.15) and using the classical bound $d(q)=\exp\left(O(\log q/\log\log q)\right)$ we deduce
$$
E_2\ll \exp\left(-\frac{1}{2}(\log q+r\log r) +O\left(\frac{\log q}{\log\log q}+r\log\log r\right)\right)+ \exp\left(-4r\log r\right).
$$
Continuing along the same line as in the proof of Theorem 1.1 we obtain analogously to (4.11)
\begin{equation}
 \delta_q= \left(1+O\left(\frac{r^2}{\log^2q}\right)\right)\frac{1}{(2\pi)^{r/2}}\int_{x_1>x_2>\dots>x_r}\exp\left(-\frac12\vx^T \mathcal{C}^{-1}\vx\right)d\vx+ E_3,
\end{equation}
where
$$ E_3\ll \exp\left(-\frac{1}{2}(\log q+r\log r) +O\left(\frac{\log q}{\log\log q}+r\log\log r\right)\right)+ \exp\left(-4r\log r\right).
$$
Furthermore, it follows from (4.10) and (4.12) that
\begin{align*}
 \frac{1}{(2\pi)^{r/2}}\int_{x_1>x_2>\dots>x_r}\exp\left(-\frac12\vx^T \mathcal{C}^{-1}\vx\right)d\vx&= \frac{1}{r!}\exp\left(O\left(\frac{r^2}{\log q}\right)\right)\\
 &=\exp\left(-r\log r+r+O\left(\log r+ \frac{r^2}{\log q}\right)\right),\\
\end{align*}
by Stirling's formula.
Inserting this estimate in (4.16) completes the proof.

\end{proof}
\begin{proof}[Proof of Theorem 1.3]
Since $\mu_{q;a_1,\dots,a_r}$ is absolutely continuous with respect to the Lebesgue measure, it follows from (1.1) that
$$ \delta_{q;a_1,\dots,a_{r-1}}= \delta_{q;a_r,a_1,\dots,a_{r-1}}+ \delta_{q;a_1,a_r,\dots,a_{r-1}}+ \cdots+ \delta_{q;a_1,\dots,a_{r-1},a_{r}}.$$ Hence, if $2\leq s<r\leq \phi(q)$ are positive integers then
\begin{equation}
\max_{(a_1, \dots, a_r)\in \mathcal{A}_r(q)}\delta_{q;a_1,\dots,a_r} < \max_{(b_1, \dots, b_s)\in \mathcal{A}_s(q)}\delta_{q;b_1,\dots,b_s}.
\end{equation}
On the other hand, using Theorem 1.2 with $s=[(1-\epsilon/2)\log q/\log\log q]$, we get
$$ \max_{(b_1, \dots, b_s)\in \mathcal{A}_s(q)}\delta_{q;b_1,\dots,b_s}= \exp\left(-s\log s+s+O\left(\log s+ \frac{r^2}{\log q}\right)\right)\ll_{\epsilon} \frac{1}{q^{1-\epsilon}}.$$
The theorem follows upon combining this inequality with (4.17).
\end{proof}


\begin{thebibliography}{DDDD}

\bibitem[1]{BH1}   C. Bays and R. H. Hudson,
\emph{The mean behavior of primes in arithmetic progressions}.
J. Reine Angew. Math. 296 (1977), 80--99.

\bibitem[2]{BH2}   C. Bays and R. H. Hudson,
\emph{The cyclic behavior of primes in the arithmetic progressions modulo $11$}.
J. Reine Angew. Math. 339 (1983), 215--220.

\bibitem[3]{Da} H. Davenport,
\emph{Multiplicative number theory}.
Graduate Texts in Mathematics, 74. Springer-Verlag, New York, 2000.

\bibitem[4]{FeM}   A. Feuerverger and G. Martin,
\emph{Biases in the Shanks-R\'enyi prime number race}.
Experiment. Math. 9 (2000), no. 4, 535--570.

\bibitem[5]{FiM}  D. Fiorilli and G. Martin,
\emph{Inequities in the Shanks-R\'enyi Prime Number Race: An asymptotic formula for the densities}. To appear in J. Reine Angew. Math.

\bibitem[6] {FK1} K. Ford and S. Konyagin,
\emph{The prime number race and zeros of $L$-functions off the critical line}. Duke Math. J. 113 (2002), no. 2, 313--330.

\bibitem[7] {FK2} K. Ford and S. Konyagin,
\emph{The prime number race and zeros of $L$-functions off the critical line. II.} Proceedings of the Session in Analytic Number Theory and Diophantine Equations, 40 pp., Bonner Math. Schriften, 360, Univ. Bonn, Bonn, 2003.

\bibitem[8] {Ka1} J. Kaczorowski,
\emph{A contribution to the Shanks-R\'enyi race problem}.
Quart. J. Math. Oxford Ser. (2) 44 (1993), no. 176, 451--458.

\bibitem[9] {Ka2} J. Kaczorowski,
\emph{Results on the distribution of primes}.
J. Reine Angew. Math. 446 (1994), 89--113.

\bibitem[10] {Ka3} J. Kaczorowski,
\emph{On the Shanks-R\'enyi race problem}.
Acta Arith. 74 (1996), no. 1, 31--46.

\bibitem[11]{KT}  S. Knapowski and P. Tur\'an,
\emph{Comparative prime-number theory. I.}
 Acta Math. Acad. Sci. Hungar. 13 (1962) 299-–314; II. 13 (1962), 315--342;
 III. 13 (1962), 343--364; IV. 14 (1963), 31--42; V. 14 (1963), 43--63; VI. 14 (1963), 65--78; VII. 14 (1963), 241--250; VIII. 14 (1963), 251--268.

\bibitem[12]{La1}  Y. Lamzouri,
\emph{Prime number races with three or more competitors}. 38 pages.
 arXiv:1101.0836.

\bibitem[13]{La2}  Y. Lamzouri,
\emph{Large deviations of the limiting distribution in the Shanks–-R\'enyi
prime number race}. 19 pages.
 arXiv:1103.0060.

\bibitem[14]{Li1}  J. E. Littlewood,
\emph{Distribution des nombres premiers}.
C. R. Acad. Sci. Paris 158 (1914), 1869--1872.


\bibitem[15]{Ma}  G. Martin,
\emph{Asymmetries in the Shanks-Rényi prime number race}.
Number theory for the millennium, II (Urbana, IL, 2000), 403–415, A K Peters, Natick, MA, 2002.

\bibitem[16]{RS}  M. Rubinstein and P. Sarnak,
\emph{Chebyshev's bias}.
Experiment. Math. 3 (1994), no. 3, 173–-197.








\end{thebibliography}
\end{document}